\newtheorem{theorem}{Theorem}[section] 
\newtheorem{proposition}[theorem]{Proposition} 
\newtheorem{lemma}[theorem]{Lemma} 
\theoremstyle{definition}
\newtheorem{definition}[theorem]{Definition}
\newtheorem{defprop}[theorem]{Definition-Proposition}
\theoremstyle{remark} 
\newtheorem{remark}[theorem]{Remark}
\newcommand{\cf}{\emph{cf.}~}
\newcommand{\ie}{\emph{i.e.}~}
\renewcommand{\and}{\text{and}} 
\renewcommand{\for}{\text{ for }}
\renewcommand{\forall}{\text{ for all }}
\newcommand{\ZZ}{\mathbb{Z}}
\newcommand{\RR}{\mathbb{R}}
\newcommand{\CC}{\mathbb{C}}
\newcommand{\ra}{\rightarrow}
\newcommand{\q}{q:\ZZ^n\rightarrow\ZZ}
\newcommand{\B}{(-,-)}
\newcommand{\Gq}{G}
\newcommand{\tildeG}{\tilde{G}}
\renewcommand{\H}{\tilde{H}}
\newcommand{\set}[1]{\{#1\}}
\newcommand{\setP}[2]{\set{#1\mid#2}}
\DeclareMathOperator{\ad}{ad}
\DeclareMathOperator{\rad}{rad} 
\DeclareMathOperator{\radC}{rad_{\CC}} 
\DeclareMathOperator{\rank}{rank} 
\DeclareMathOperator{\corank}{corank} 
\begin{document}

\title[The EALA associated with a connected non-negative unit
form]{The extended affine Lie algebra associated with a connected
  non-negative unit form}

\author[G. Jasso]{Gustavo Jasso}
\email{jasso.ahuja.gustavo@b.mbox.nagoya-u.ac.j}

\address{Instituto de Matem\'aticas, Univerisdad Nacional
  Aut\'otonoma de M\'exico. Mexico City, Mexico}

\thanks{The author wishes to acknowledge the support of
  M. Barot, who directed the author's undergraduate
  thesis \cite{ME} from which this article is derived. This acknowledgment
  is extended to L. Demonet for his helpful comments and
  suggestions on an earlier version of this article which greatly
  improved its readability.}  

\keywords{
  extended affine Lie algebra;
  extended affine root system;
  quadratic form;
  Kac-Moody algebra
}

\subjclass[2010]{17B67}

\begin{abstract}
  Given a connected non-negative unit form we construct an
  extended affine Lie algebra by giving a Chevalley basis for
  it. 
  We also obtain this algebra as a quotient of an algebra
  defined by means of generalized Serre relations by M. Barot,
  D. Kussin and H. Lenzing. 
  This is done in an analogous way to the construction of the
  simply-laced affine Kac-Moody algebras.
  Thus, we obtain a family of extended affine Lie algebras of
  simply-laced Dynkin type and arbitrary nullity. 
  Furthermore, there is a one-to-one correspondence between these
  Lie algebras and the equivalence classes of connected
  non-negative unit forms.
\end{abstract}

\maketitle

\section{Introduction}

To each unit form $\q$, M. Barot, D. Kussin and H. Lenzing
associate in \cite{BKL} a complex Lie algebra $\tildeG(q)$ by
means of generalized Serre relations.
We are interested in the case when the form $q$ is connected and
non-negative, see Section \ref{sec:ealas} for definitions.
When the rank of the radical of $q$ is 0 or 1,
the algebra $\tildeG(q)$ is a simply-laced finite
dimensional simple Lie algebra or a simply-laced affine
Kac-Moody algebra respectively.
Also, the algebra $\tildeG(q)$ admits a root space decomposition
with respect to a finite dimensional abelian Cartan subalgebra
$\H$ with the roots of $q$, \ie the set 
$R(q):=q^{-1}(0)\cup q^{-1}(1)$, as root system.
Moreover, $R(q)$ is a so-called extended affine root system, 
see Definition \ref{def:ears} and Proposition \ref{prop:Rq}.
Extended affine root systems are a certain generalization of
affine root systems; one important difference is that, in general,
isotropic roots generate a subspace (of an euclidean space, say) 
of dimension higher than 1.
Extended affine root systems are precisely the root systems
associated with extended affine Lie algebras. Roughly speaking,
extend affine Lie algebras are characterized by the existence of a
non-degenerate symmetric invariant bilinear form which induces a
root-space decomposition with respect to a finite dimensional
abelian subalgebra; moreover, the adjoint representations of
homogeneous elements of non-isotropic degree are locally
nilpotent, see Definition \ref{def:eala}.
Examples of extended affine Lie algebras are both the
simple finite-dimensional Lie algebras and the affine Kac-Moody
algebras.
We refer the reader to \cite{AABGP} for an introductory treatment
of the theory of extended affine Lie algebras and motivation for
their study.

Our aim is to study the connection between extended affine Lie
algebras and the Lie algebra $\tildeG(q)$ described above.
In fact, the Lie algebra $\tildeG(q)$ is very close to
being an extended affine Lie algebra, but in general it lacks an
invariant non-degenerate symmetric bilinear form (the
non-degeneracy being the main obstruction).
We can obtain such a form by passing to a natural quotient of
$\tildeG(q)$.
Thus, this article can be regarded as a simple extension of
\cite{BKL} towards the theory of extended affine Lie algebras.

Let us explain the contents of this article.
In Section \ref{sec:ealas}, we recall the definitions of an
extended affine Lie algebra and of an extended affine root
system. 
We also show how to associate an extended affine root system to a
given connected non-negative unit form.
In Section \ref{sec:eala_associated_unit_form}, given a connected
non-negative unit form $q$, we construct a Lie algebra $E(q)$
with root system $R(q)$ and with nullity the corank of $q$.
This construction is a slight modification of the construction of
\cite[Sec. 2]{BKL}, which also is related to a construction given by
Borcherds in \cite{Borcherds}.
The modification concerns both the Cartan subalgebra and the root
spaces associated to non-isotropic roots. 
Our main result is the following:

\begin{theorem}[see Theorems \ref{thm:Eq-is-EALA}
  and \ref{thm:q-q'}] 
  \label{thm:Eq}
  Let $\q$ be a connected non-negative unit form with associated
  root system $R(q)$. 
  Then the Lie algebra $E(q)$ is an extended affine Lie algebra
  with root system $R(q)$. 
  Furthermore, let $q$ and $q'$ be connected non-negative unit
  forms. Then, $q$ and $q'$ are equivalent if and only if $E(q)$ and
  $E(q')$ are isomorphic as graded Lie algebras.
\end{theorem}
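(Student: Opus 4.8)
The plan is to prove the two assertions separately, as in the two cited theorems. For the first --- that $E(q)$ is an extended affine Lie algebra with root system $R(q)$ --- I would verify the axioms of Definition~\ref{def:eala} directly from the Chevalley basis that defines $E(q)$. By construction $E(q)$ is $\ZZ^n$-graded with $E(q)_0 = \H$ and with support $R(q)$, which gives the root-space decomposition; that $\H$ is finite-dimensional and abelian is immediate, that it is $\ad$-diagonalizable with the root spaces as weight spaces and self-centralizing follows from the bracket relations among the basis elements (here the modification of the Cartan subalgebra is what guarantees that every root, viewed as a functional on $\H$, is nonzero), and the fact that $R(q)$ is a connected extended affine root system of nullity $\corank(q)$ is exactly Proposition~\ref{prop:Rq}, which simultaneously supplies the irreducibility and the non-isolatedness of the isotropic roots. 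For a non-isotropic root $\alpha$ and $x \in E(q)_\alpha$, local nilpotency of $\ad(x)$ follows because $x$ together with a suitable element of $E(q)_{-\alpha}$ generates an $\mathfrak{sl}_2$-subalgebra acting integrably on $E(q)$, which one reads off from the action of $\ad(e_\alpha)$ on the Chevalley basis exactly as in the finite-dimensional and affine cases.

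The delicate point, and the reason for passing from $\tildeG(q)$ to $E(q)$, is the existence of a non-degenerate invariant symmetric bilinear form $\B$. One defines $\B$ on the Chevalley basis in the usual way --- pairing $E(q)_\alpha$ with $E(q)_{-\alpha}$ and using the form associated with $q$ on $\H$ --- and checks invariance by reducing it, through bilinearity, to an identity on triples of basis elements. Non-degeneracy is the heart of the matter: the radical of $\B$ is a graded ideal, and one must show it vanishes in $E(q)$. This is precisely what the modification of the Cartan subalgebra and of the root spaces attached to non-isotropic roots is designed to force, and I expect this step --- a careful analysis of the degree-zero component and of the isotropic root spaces --- to be the main obstacle of the first part.

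For the second assertion, the forward implication is a functoriality statement: a $\ZZ$-linear automorphism $T$ of $\ZZ^n$ with $q' = q \circ T$ transports the Chevalley basis of $E(q)$, indexed by $R(q)$, to a legitimate system of Chevalley data indexed by $R(q') = T^{-1}(R(q))$, and since any two such systems yield isomorphic algebras one obtains a Lie algebra isomorphism $E(q) \ra E(q')$ that is graded with respect to $T^{-1} \colon \ZZ^n \ra \ZZ^n$. For the converse, let $\phi \colon E(q) \ra E(q')$ be an isomorphism of graded Lie algebras. The grading group of $E(q)$, being generated by its support $R(q)$, is canonically $\ZZ^n$, so $\phi$ induces a group isomorphism $\psi \colon \ZZ^n \ra \ZZ^{n'}$ --- whence $n = n'$ --- and, carrying $E(q)_\alpha$ onto $E(q')_{\psi(\alpha)}$, it yields $\psi(R(q)) = R(q')$. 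It then remains to deduce $q \sim q'$, and the key point is that a non-negative unit form is determined by the mere \emph{set} $R(q)$: its isotropic roots are exactly the $\alpha \in R(q)$ with $2\alpha \in R(q)$; non-negativity forces the coefficient of $x_i x_j$ in $q$, namely $q(e_i + e_j) - 2$, to lie in $\{-2,-1,0,1,2\}$; and which value occurs is detected by whether $e_i + e_j$ or $e_i - e_j$ lies in $R(q)$ and, if so, whether that vector is isotropic. Since $\psi$ preserves this characterization of non-isotropic roots, $q'' := q' \circ \psi$ is an integral quadratic form with $q''(e_i) = q'(\psi(e_i)) = 1$, hence a non-negative unit form, and $R(q'') = \psi^{-1}(R(q')) = R(q)$; therefore $q'' = q$, i.e. $q = q' \circ \psi$, so $q$ and $q'$ are equivalent. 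The only care needed in this part is the bookkeeping that the grading support is exactly $R(q)$ and that the induced $\psi$ is an honest $\ZZ$-linear automorphism; given that, the recovery of $q$ is elementary.
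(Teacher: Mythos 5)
Your overall strategy is sound and large parts of it coincide with the paper's, but the second half of your argument takes a genuinely different and in fact more elementary route. For the first assertion the paper does what you propose: it defines the form on the homogeneous components, checks invariance on triples of basis elements, and proves non-degeneracy by exhibiting, for each nonzero homogeneous $x$, a partner of opposite degree pairing nontrivially with it (for $x=\pi_\alpha(v)$ with $v\notin\radC q$ one takes $w$ with $q(v,w)\neq0$); this is short, not the ``main obstacle'' you anticipate, though you correctly locate where the quotient by $\radC q$ in the isotropic root spaces is used. One soft spot: your justification of (EA3) via an integrable $\mathfrak{sl}_2$-subalgebra is close to circular (integrability is essentially local nilpotency restated); the paper instead argues directly that $(\ad x_\alpha)^n(x_\beta)\in E_{\beta+n\alpha}$ and that only finitely many $\beta+n\alpha$ lie in $R(q)$ by axiom (R6) of Definition \ref{def:ears} -- you have all the ingredients for this, so it is easily repaired. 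For the second assertion your two directions diverge from the paper's. In the forward direction the paper invokes Theorem \ref{thm:BKL-q-q'} (the BKL isomorphism $\tildeG(q)\cong\tildeG(q')$) and descends via Proposition \ref{prop:epi}, whereas you transport the Chevalley data directly along $T$; this works, but you should note that the transported structure constants come from the bilinear form $B\circ(T\times T)$, so you need the independence-of-$B$ statement to conclude. In the converse the paper extracts the corank from $\dim_\CC\H-\dim_\CC E_\alpha$ for an isotropic $\alpha\neq0$ and then appeals to the Barot--de la Pe\~na classification (Theorem \ref{thm:Dynkin-types}) to match Dynkin types, while you recover $q$ outright from the set $R(q)$: isotropy of $\alpha$ is detected by $2\alpha\in R(q)$, and $q_{ij}\in\{-2,\dots,2\}$ is read off from membership and isotropy of $c_i\pm c_j$. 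Your recovery argument is correct (I checked the five cases) and buys a self-contained proof that avoids the classification theorem entirely; its only hidden assumption, which the paper shares, is that a graded isomorphism comes with a $\ZZ$-linear identification $\psi$ of the grading groups carrying $R(q)$ onto $R(q')$, which you rightly flag as the bookkeeping to be done.
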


In Section \ref{sec:kacmoody_construction}, we recall from
\cite{BKL} the construction of the Lie algebra $\tildeG(q)$ by
means of generalized Serre relations. 
We show that $E(q)$ can also be obtained from $\tildeG(q)$ in a
way that imitates the construction  of the affine Kac-Moody
algebras, \cf \cite[Chapter 14]{Carter}.  
More precisely, we have the following result:

\begin{theorem}\label{thm:GKM}
  Let $\q$ be a connected non-negative unit form. 
  Then there is an isomorphism of graded Lie algebras between
  $\tildeG(q)/I$ and $E(q)$ where $I$ is the unique
  maximal ideal of $\tildeG(q)$ with $I\cap \H=\set{0}$.
\end{theorem}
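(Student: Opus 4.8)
The plan is to exhibit the maximal ideal $I$ concretely and then show that quotienting by it realizes exactly the defining relations of $E(q)$. First I would recall the generators and generalized Serre relations defining $\tildeG(q)$ from \cite{BKL}: generators $e_i, f_i$ ($i = 1, \dots, n$) together with a Cartan part, and relations governing the bracket with the Cartan subalgebra $\H$ as well as the higher Serre relations read off from the form $q$ (equivalently, from the symmetrized bilinear form $\B$ associated to $q$). Since $\tildeG(q)$ already carries a root-space decomposition with respect to $\H$ and $\H \subseteq \tildeG(q)$ with $\H$ abelian, any ideal $I$ is graded, $I = \bigoplus_{\alpha} (I \cap \tildeG(q)_\alpha)$, and the condition $I \cap \H = \set{0}$ says $I$ contains no nonzero degree-zero element. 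Standard Kac–Moody-type arguments (\cf \cite[Chapter 14]{Carter}) then show that among all ideals meeting $\H$ trivially there is a unique maximal one, namely the sum of all such ideals; call it $I$. The key observation is that $I$ is precisely the radical of the invariant bilinear form that $\tildeG(q)$ \emph{would} carry if it were non-degenerate — this is the obstruction mentioned in the introduction, and $\tildeG(q)/I$ is by construction the algebra obtained after killing that radical.

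Next I would construct an explicit surjective homomorphism $\varphi\colon \tildeG(q) \lra E(q)$. Since $E(q)$ has root system $R(q)$ with one-dimensional root spaces at the real (non-isotropic) roots and a Chevalley-type basis (by the construction of Section \ref{sec:eala_associated_unit_form}), I would send each generator $e_i$ (resp.\ $f_i$) to the chosen root vector of $E(q)$ in the degree corresponding to the $i$-th simple root (resp.\ its negative), and map the Cartan part of $\tildeG(q)$ onto $\H$ of $E(q)$ via the identification of Cartan data built into both constructions. One must check that the generalized Serre relations of $\tildeG(q)$ are satisfied by these images in $E(q)$; this is where the fact (Theorem \ref{thm:Eq-is-EALA}) that $E(q)$ is an EALA with root system $R(q)$ does the work, since the Serre relations are exactly the statements that certain brackets land in root spaces that are zero (because the relevant $\ZZ^n$-degree is not a root, by non-negativity of $q$) or are forced by the $\mathfrak{sl}_2$-triples sitting at real roots. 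Surjectivity is immediate because $E(q)$ is generated by its real root vectors together with $\H$, and $\H$ is hit since $E(q)$ is an EALA (its core plus the image of the $h_i$ generate everything — here one uses that the nullity equals $\corank q$ and that $E(q)$ was built with no extra abelian directions beyond what $\tildeG(q)$ already has in its Cartan).

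Then I would identify $\ker\varphi$ with $I$. For one inclusion, $\ker\varphi \cap \H = \set{0}$ because $\varphi$ restricts to the identification of Cartan subalgebras, so $\ker\varphi$ is one of the ideals meeting $\H$ trivially and hence $\ker\varphi \subseteq I$ by maximality. For the reverse inclusion $I \subseteq \ker\varphi$, I would argue that $\varphi$ induces a surjection $\tildeG(q)/I \twoheadrightarrow E(q)$, and then show this is injective by a dimension (degree-by-degree) count: both $\tildeG(q)/I$ and $E(q)$ are graded by $\ZZ^n$ (equivalently by the root lattice), both have the same Cartan subalgebra $\H$ in degree $0$, and in each nonzero degree $\alpha$ the graded piece of $\tildeG(q)/I$ has dimension at most that of $E(q)_\alpha$ — one-dimensional when $\alpha \in R(q)$ is real, zero when $\alpha \notin R(q)$, and controlled by the corank in the isotropic case. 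The surjection being degree-preserving and hitting a space of dimension $\geq \dim E(q)_\alpha$ in each degree forces equality, hence injectivity; so $\tildeG(q)/I \cong E(q)$ as $\ZZ^n$-graded Lie algebras, which is the asserted isomorphism of graded Lie algebras.

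The main obstacle will be the dimension bound on the graded pieces of $\tildeG(q)/I$ — in particular showing that the isotropic root spaces of $\tildeG(q)/I$ do not exceed the dimension $\corank q$ built into $E(q)$, and that no ``junk'' survives in degrees $\alpha \notin R(q)$. Controlling the isotropic part is exactly the subtlety that distinguishes the EALA setting (nullity $> 1$) from the affine Kac–Moody case, and it is where the non-negativity of $q$ and the precise structure of $R(q)$ (Proposition \ref{prop:Rq}) must be used in an essential way; I expect to handle it by propagating information from the real root spaces — which are pinned down by $\mathfrak{sl}_2$-representation theory inside $\tildeG(q)/I$ — into the isotropic degrees via brackets, together with the maximality of $I$ to discard anything central that would otherwise inflate the degree-zero-adjacent pieces.
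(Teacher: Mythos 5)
Your first half matches the paper: the epimorphism $\varphi\colon\tildeG(q)\to E(q)$ sending generators to the Chevalley-type root vectors of $E(q)$, with the Serre relations checked in $E(q)$ and $\varphi$ restricting to an isomorphism on $\H$, is exactly Proposition \ref{prop:epi}, and $\ker\varphi\subseteq I$ by maximality is also how the paper begins. The problem is the reverse inclusion $I\subseteq\ker\varphi$, which is the real content of the theorem, and there your argument has a genuine gap. As written it is circular: you say ``$\varphi$ induces a surjection $\tildeG(q)/I\twoheadrightarrow E(q)$'', but such an induced map exists only if $I\subseteq\ker\varphi$, which is precisely what you are trying to prove. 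The surjection that actually exists (from $\ker\varphi\subseteq I$) goes the other way, $E(q)\cong\tildeG(q)/\ker\varphi\twoheadrightarrow\tildeG(q)/I$, and that map automatically gives $\dim(\tildeG(q)/I)_\alpha\leq\dim E(q)_\alpha$ in every degree, so your proposed inequality in that direction proves nothing; what you would need is the reverse bound, i.e.\ that the quotient by $I$ does not collapse the isotropic root spaces further than the quotient by $\ker\varphi$ already does. You candidly flag this as ``the main obstacle,'' but the sketch for it (propagating $\mathfrak{sl}_2$-information into isotropic degrees inside $\tildeG(q)/I$) is not an argument, and it is attacking the harder object. (Also, a small factual slip: the nonzero isotropic root spaces of $E(q)$ are $\CC^n/\radC q$, of dimension $n-\corank q$, not $\corank q$; and the identification of $I$ with ``the radical of the would-be invariant form on $\tildeG(q)$'' is asserted, not proved, and is never needed.)

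The paper closes the gap by working inside $E(q)$ rather than inside $\tildeG(q)/I$: it shows that $E(q)$ has \emph{no} nonzero ideal $J$ with $J\cap\H=\set{0}$. This is a two-line computation with the explicit bracket rules: if $J$ contained a nonzero homogeneous $e_\alpha$ with $\alpha\in R^\times$, then $[e_\alpha,-e_{-\alpha}]=\pi_0(\alpha)\neq 0$ lies in $J\cap\H$; if it contained $\pi_\alpha(v)$ with $\alpha\in R^0\setminus\set{0}$ and $v\notin\radC q$, choose $w$ with $q(v,w)\neq 0$, and then $[\pi_\alpha(v),\pi_{-\alpha}(w)]=q(v,w)\pi_0(\alpha)\neq 0$ lies in $J\cap\H$ --- contradiction either way. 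Granting this, $\varphi(I)$ is a graded ideal of $E(q)$ whose degree-zero part vanishes (because $I\cap\H=\set{0}$ in $\tildeG(q)$ and $\varphi$ is graded and injective on $\H$), hence $\varphi(I)=\set{0}$, i.e.\ $I\subseteq\ker\varphi$, and $\tildeG(q)/I\cong E(q)$. If you replace your dimension count by this ``$E(q)$ has no ideal meeting $\H$ trivially'' step --- which uses only the explicit construction of $E(q)$ and the nondegeneracy of its form on the root spaces, not any control of the root multiplicities of $\tildeG(q)/I$ --- your outline becomes the paper's proof.
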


\section{Preliminaries}
\label{sec:ealas}

We recall the definitions of an extended affine Lie algebra and
of an extended affine root system.
We follow closely the exposition of \cite{AABGP}.
Afterwards, we recall basic concepts regarding unit forms and
present a straightforward way to associate an extended affine root
system with a connected non-negative unit form, 
see Proposition \ref{prop:Rq}.

\subsection{Extended affine Lie algebras and their root systems}

Let $L$ be a Lie algebra over the field $\CC$ of complex numbers
having the following properties:
\begin{enumerate}
\item[(EA1)] $L$ has a non-degenerate symmetric bilinear form
  \[ 
  \B:L\times L\rightarrow\CC 
  \]
  which is invariant in the sense that 
  \[
  ([x,y],z)=(x,[y,z])\forall x,y,z\in L. 
  \]
\item[(EA2)] $L$ has a non-trivial finite dimensional abelian
  subalgebra $H$ which equals its own centralizer in $L$ and
  such that $\ad_{L}h$ is diagonalizable for all $h\in H$.
\end{enumerate}
It follows from (EA2) that $L$ admits a root space decomposition 
\[
L=\bigoplus_{\alpha\in H^*} L_{\alpha},
\]
where, for each $\alpha\in H^*$,
\[
L_{\alpha}:=\setP{x\in L}{[h,x]=
  \alpha(h)x \quad\text{for all}\quad h\in H}.
\]
We call 
\[
R=\setP{\alpha\in H^*}{L_{\alpha}\neq\{0\}}
\]
the \emph{root system of $L$ with respect to $H$} (note that
$(L_{\alpha},L_{\beta})=\{0\}$ whenever $\alpha+\beta\neq 0$). 
Since the form $\B$ is invariant and $\ad_L h$ is diagonalizable
for all $h\in H$, the restriction of the form to $H$
is non-degenerate.
Thus we may transfer the bilinear form $\B$ to $H^*$ via the induced isomorphism
\begin{align*}
  H &\rightarrow H^*\\
  h &\mapsto (h,-).
\end{align*}
Then, we let 
\[
R^0=\setP{\alpha\in R}{(\alpha,\alpha)=0}
\quad\text{and}\quad
R^\times=R\setminus R^0. 
\]
We call $R^0$ the set of \emph{isotropic roots} and $R^\times$
the set of \emph{non-isotropic roots}.
\begin{enumerate}
\item[(EA3)] If $\alpha\in R^\times$ is a non-isotropic root and
  $x_{\alpha}\in L_{\alpha}$, then $\ad_{L}x_{\alpha}$ acts
  locally nilpotently on $L$.
  That is, for each $y\in L$ there exists $n>0$ such that 
  \[
  (\ad_{L}x_{\alpha})^n y=0.
  \]
\item[(EA4)] $R$ is a discrete subset of $H^*$.
\item[(EA5)] $L$ is irreducible, \ie~$R^\times$ cannot be
  decomposed as a disjoint union $R_{1}\amalg R_{2}$ where
  $R_{1},R_{2}$ are non-empty subsets of $R^\times$ satisfying
  $(R_{1},R_{2})=\set{0}$.
\end{enumerate}

\begin{definition}
  \label{def:eala}
  \cite[Ch. I]{AABGP}
  Let $L$ be
  a Lie algebra over $\CC$ such that it satisfies the five
  properties (EA1)-(EA5). Then, the triple $(L,H,\B)$ is called
  an \emph{extended affine Lie algebra}.
\end{definition}

The root systems of extended affine Lie algebras can be axiomatically described.
They belong to the class of extended affine root systems.

\begin{definition}
  \label{def:ears}
  \cite[Ch. II, Def. 2.1]{AABGP}
  Assume that $V$ is a finite dimensional real vector space with
  a non-negative symmetric bilinear form $\B$, and that $R$ is a
  subset of $V$. 
  Let 
  \[ 
  R^0=\setP{\alpha\in R}{(\alpha,\alpha)=0} 
  \quad \text{and}\quad
  R^\times=\setP{\alpha\in R}{(\alpha,\alpha)\neq 0}.
  \] 
  We call $R$ an \emph{extended affine root system}
  in $V$ if $R$ has the following properties:
  \begin{enumerate}
  \item[(R1)] $0\in R$. 
  \item[(R2)] $-R=R$. 
  \item[(R3)] $R$ spans $V$. 
  \item[(R4)] If $\alpha\in R^\times$ then $2\alpha\notin R$. 
  \item[(R5)] $R$ is discrete in $V$. 
  \item[(R6)] If $\alpha\in R^\times$ and $\beta\in R$ then there
    exist $d,u\in \ZZ_{\geqslant0}$ such that
    \[
    \{\beta+n\alpha:n\in\ZZ\}\cap
    R=\{\beta-d\alpha,\dots,\beta+u\alpha\}
    \quad\text{and}\quad
    d-u=2\frac{(\alpha,\beta)}{(\alpha,\alpha)}.
    \]
  \item[(R7)] $R^\times$ cannot be decomposed as a disjoint union
    $R_{1}\amalg R_{2}$ where $R_{1},R_{2}$ are non-empty subsets
    of $R^\times$ satisfying $(R_{1},R_{2})=\set{0}$.
  \item[(R8)] For any $\sigma\in R^0$ there exists $\alpha\in
    R^\times$ such that $\alpha+\sigma\in R$. 
  \end{enumerate}
\end{definition}

\subsection{Unit forms}

A \emph{unit form} is a quadratic form $\q$ of the form
\[ 
q(x)=\sum_{i=1}^n x_{i}^2+\sum_{i<j} q_{ij}x_{i}x_{j} 
\for
x\in\ZZ^n, 
\]
such that $q_{ij}\in\ZZ$ for all $1\leqslant i<j\leqslant n$.
We say that $q$ is \emph{positive} (resp. \emph{non-negative}) if
$q(x)>0$ (resp. $q(x)\geqslant0$) for all
$x\in\ZZ^n\setminus\{0\}$. 
Recall that any unit form has an
\emph{associated symmetric matrix} $C=C_{q}\in\ZZ^{n\times n}$
defined by
\[ 
C_{ij}:=q(c_i+c_j)-q(c_i)-q(c_j), 
\]
where $\set{c_1,\dots,c_n}$ is the standard basis of $\ZZ^n$. 
Also, recall that 
\[ 
q(x,y):=x^\top Cy,\for x,y\in\ZZ^n
\] 
is the \emph{symmetric bilinear form associated with $q$} and
we have $2q(x)=q(x,x)$ for each $x\in\ZZ^n$.
The \emph{radical} of $q$ is the subgroup of $\ZZ^n$ defined
by
\[
\rad q := \setP{x\in\ZZ^n}{q(x,-)=0}.
\]
Further, we set $\corank q:=\rank(\rad q)$. Observe that $\rad
q\subseteq q^{-1}(0)$. 
If $q$ is non-negative, then $\rad q=q^{-1}(0)$. 
The \emph{associated bigraph of $q$} is the graph with vertices
$1,\dots,n$ and $|q_{ij}|$ solid edges between vertices $i$ and
$j$ if $q_{ij}<0$ and $|q_{ij}|$ dotted edges
between $i$ and $j$ if $q_{ij}>0$. We say that a unit form is
\emph{connected} if its associated bigraph is connected. 
We say that two non-negative unit forms $q$ and $q'$ are
\emph{equivalent} if there exist an automorphism $T$ of $\ZZ^n$
such that $q'=q\circ T$. 
Connected non-negative unit forms can be classified as follows:

\begin{theorem}
  \label{thm:Dynkin-types}
  \cite[Thm. (Dynkin-types)]{BdP}
  Let $\q$ be a non-negative unit form.
  Then there exists a $\ZZ$-invertible transformation
  $T:\ZZ^n\to\ZZ^n$ such that
  \[
  (q\circ T)(x_1,\dots,x_n) = q_{\Delta}(x_1,\dots,x_{n-\nu})
  \]
  where $\nu = \corank q$ and $\Delta$ is a Dynkin diagram with
  $n-\nu$ vertices
  uniquely determined by $q$.
  We call $\Delta$ the Dynkin type of $q$.
\end{theorem}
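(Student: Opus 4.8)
The plan is to reduce to the positive definite case by splitting off the radical, to recognize the norm-one vectors of the resulting form as a simply-laced root system, and then to read off the Dynkin type from a choice of simple roots.

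First I would split the radical. Since $q$ is non-negative, $\rad q = q^{-1}(0)$ is saturated in $\ZZ^n$: if $mx \in \rad q$ with $m \neq 0$, then $m\, q(x,y) = q(mx,y) = 0$ for all $y$, so $q(x,-) = 0$ and $x \in \rad q$. Hence $\ZZ^n/\rad q$ is torsion-free, $\rad q$ is a direct summand, and I may write $\ZZ^n = U \oplus \rad q$ with $\rank U = n - \nu$, where $\nu = \corank q$. Writing $x = u + \rho$ with $\rho \in \rad q$ and using that the radical is isotropic and orthogonal to everything, one gets $q(x) = q(u)$, so $q$ factors as $q = \bar q \circ \pi$ through the projection $\pi \colon \ZZ^n \to U$; here $\bar q := q|_U$ is positive definite, because $U \cap \rad q = \set{0}$ and $q^{-1}(0) = \rad q$. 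Each standard basis vector projects to a vector of $\bar q$-value $1$, so $U$ is generated over $\ZZ$ by $R_1 := \setP{v \in U}{\bar q(v) = 1}$.

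Next I would identify $R_1$ as a root system. Extending scalars to the Euclidean space $V := U \otimes_{\ZZ} \RR$ with the positive definite form $(v,w) := q(v,w)$, every $v \in R_1$ satisfies $(v,v) = 2$, and Cauchy--Schwarz forces $(v,w) \in \set{-2,-1,0,1,2}$ for $v,w \in R_1$, with $\pm 2$ occurring only for $w = \pm v$. One then checks the root-system axioms: $R_1$ is finite, spans $V$, is stable under the reflections $s_v(w) = w - (v,w)v$, and satisfies $(w,v) \in \ZZ$. Thus $R_1$ is a reduced crystallographic root system in which all roots have equal length, hence simply-laced, so it is a disjoint union of root systems of types $A$, $D$, $E$; connectedness of the associated bigraph translates into irreducibility of $R_1$, giving a single such component.

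Finally I would build $T$ and settle uniqueness. Choosing a base $\set{\alpha_1,\dots,\alpha_{n-\nu}}$ of $R_1$, the classical theory gives that it is a $\ZZ$-basis of the root lattice $\ZZ R_1 = U$; together with a $\ZZ$-basis $\set{\rho_1,\dots,\rho_\nu}$ of $\rad q$ it yields a basis of $\ZZ^n$ and hence an automorphism $T$. Since $\bar q(\alpha_i) = 1$ and, for $i \neq j$, $q(\alpha_i,\alpha_j) \in \set{0,-1}$ (simple roots pair non-positively, and here the value is constrained by $(\alpha_i,\alpha_i) = 2$), the expansion of a quadratic form in the basis $\set{\alpha_1,\dots,\alpha_{n-\nu}}$ gives $q \circ T = q_\Delta$, where $\Delta$ has an edge $\set{i,j}$ exactly when $q(\alpha_i,\alpha_j) = -1$. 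For uniqueness, $R_1 = q^{-1}(1)$ depends only on $q$; any two bases are Weyl-conjugate and the Weyl group acts by isometries of $R_1$, so $\Delta$ is well defined, while an equivalence $q' = q \circ S$ induces an isometry $R_1 \xrightarrow{\sim} R_1'$ and hence equality of Dynkin types (the vertex count being fixed by the invariant $\corank q$). I expect the main obstacle to be the middle step: verifying all root-system axioms for $R_1$ and invoking the ADE classification, together with the lattice-theoretic point that a base of $R_1$ is a $\ZZ$-basis of $U$ rather than merely of its rational span; the splitting and the final assembly are then routine.
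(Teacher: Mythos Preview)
The paper does not prove this theorem; it is quoted from \cite{BdP} as a known classification result and used as a black box, so there is no proof in the paper to compare your argument against.

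That said, your sketch follows the standard route and is essentially sound: split off the radical using that $\rad q$ is a pure subgroup of $\ZZ^n$, observe that the induced form $\bar q$ on the complement $U$ is positive definite with $R_1=\bar q^{\,-1}(1)$ a finite simply-laced root system, and choose a base of $R_1$ together with a basis of $\rad q$ to manufacture $T$. Two small remarks. First, the theorem as stated does not assume $q$ connected, so you should not invoke connectedness of the bigraph to force irreducibility of $R_1$; without that hypothesis $\Delta$ may be a disjoint union of simply-laced Dynkin diagrams, which is still consistent with the statement (and the connected case, which is all the present paper actually uses, then follows). Second, you correctly flag the one genuinely delicate lattice-theoretic point, namely that a base of $R_1$ is a $\ZZ$-basis of $U$ rather than merely of $U\otimes_{\ZZ}\mathbb{Q}$: this hinges on your observation that the projections $\pi(c_i)\in R_1$ already generate $U$ over $\ZZ$, so that the root lattice $\ZZ R_1$ coincides with $U$, after which the standard fact that simple roots span the root lattice over $\ZZ$ finishes the argument.
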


Note that Theorem \ref{thm:Dynkin-types} implies that two
connected non-negative unit forms are equivalent if and only if
they have the same corank and the same Dynkin type, see the corollary
after \cite[Thm. (Dynkin-types)]{BdP}.

There is a straightforward way to associate an extended affine root system to a given
connected non-negative unit form.

\begin{defprop}
  \label{prop:Rq}
  Let $\q$ be a connected non-negative unit form. 
  Then 
  \[
  R(q):=q^{-1}(0)\cup q^{-1}(1)\subset \ZZ^n.
  \]  
  is an extended affine root system in $\RR^n$ with respect to the symmetric bilinear
  form of $q$.
\end{defprop}

\begin{proof}
  All but properties (R6) and (R7) are easily verified (for property
  (R3) observe that the standard basis of $\ZZ^n$ belongs to
  $R(q)$ since $q(c_i)=1$ for all $i\in\set{1,\dots,n}$).
  We first show that $R(q)$ verifies axiom (R6). 
  Let $\alpha\in q^{-1}(1)$, $\beta\in R(q)$, and $n\in\ZZ$. 
  We need to determine the values of $n$ for which
  $q(\beta+n\alpha)\in\set{0,1}$. 
  Note that we have
  \begin{align*}
    q(\beta+n\alpha)=&\tfrac{1}{2}q(\beta+n\alpha,\beta+n\alpha)\\
    =& \tfrac{1}{2}(q(\beta,\beta) + 2nq(\alpha,\beta) 
    + n^2q(\alpha,\alpha))\\
    =& q(\beta) + nq(\alpha,\beta) + n^2.
  \end{align*}
  First, suppose that $q(\beta)=0$. Since $q$ is non-negative we
  have that $\beta\in\rad q$ and thus $q(\alpha,\beta)=0$.
  Then we have that $\beta+n\alpha\in
  R(q)$ if and only if $n\in\set{-1,0,1}$. Thus, taking $d=u=1$
  gives the result in this case.
  Now suppose otherwise that $q(\beta)=1$. 
  A simple calculation shows that $q(\beta+n\alpha)=1$ if and only
  if $n=0$ or $n=-q(\alpha,\beta)$.
  On the other hand, $q(\beta+n\alpha)=0$ if and only if
  \[
  n = \frac{-q(\alpha,\beta)\pm\sqrt{q(\alpha,\beta)^2-4}}{2}
  \]
  Since $q(\beta+n\alpha)\geqslant 0$, by putting
  $x=-\tfrac{1}{2}q(\alpha,\beta)$ and evaluating
  $q(\beta+x\alpha)$ we obtain that $-2\leqslant
  q(\alpha,\beta)\leqslant 2$.
  Hence $q(\beta+n\alpha)=0$ has
  integer solution if and only if
  $q(\alpha,\beta)\in\set{-2,2}$, which is given by
  $n=-\tfrac{1}{2}q(\alpha,\beta)$.
  Thus our analysis shows that $q(\beta+n\alpha)\in R(q)$ if and
  only if
  \[ 
  \begin{cases}
    n\in\set{0,-q(\alpha,\beta)}
    &\text{if } q(\alpha,\beta)\in\set{-1,1}\\
    n\in\set{0,-\tfrac{1}{2}q(\alpha,\beta),-q(\alpha,\beta)}
    &\text{if }q(\alpha,\beta)\in\set{-2,2}.
  \end{cases}
  \]
  Since
  $q(\alpha,\beta)=\tfrac{2q(\alpha,\beta)}{q(\alpha,\alpha)}$
  the claim follows in this case also.

  We now show that $R(q)$ satisfies axiom (R7). 
  Let $R(q)^{\times}=R_1\amalg R_2$.
  Without loss of generality, we can assume using the
  connectedness of $q$ that $\{c_1,\dots,c_n\}\subset R_1$.
  On the other hand, for any $\alpha\in R_2$ there exist
  $\alpha_i\in\ZZ$ such that $\alpha=\sum_{i=1}^n\alpha_i c_i$.
  Thus we have
  \[
  q(\alpha,\alpha)=\sum_{i=0}^n\alpha_iq(c_i,\alpha)
  \quad\text{for all}\quad
  \alpha\in R_2\subset R(q)^{\times}.
  \]
  Hence if $(R_1,R_2)=\set{0}$, then $R_2$ must be empty.
  This shows that $R(q)$ is irreducible.
  Thus $R(q)$ satisfies properties (R1)-(R8) and is thus an
  extended affine root system.\qedhere 
\end{proof}

\section{Main result}
\label{sec:eala_associated_unit_form}

To a connected non-negative unit form $q$, we associate an
extended affine Lie algebra $E$ by modifying the construction of
\cite[Sec. 2]{BKL}.
The modification concerns the root
spaces attached to non-zero isotropic roots. 
We give the proof of Theorem \ref{thm:Eq} in the second part of
this section. 

\subsection{Construction}

We fix a connected non-negative unit form $\q$ with symmetric
matrix $C\in\ZZ^{n\times n}$.
For simplicity, we let
\[
R:= R(q),
\quad
R^\times := R(q)^\times = q^{-1}(1)
\quad \text{and}\quad
R^0 := R(q)^0 = q^{-1}(0).
\]
Recall that since $q$ is non-negative we have that
$R^0=q^{-1}(0)=\rad q$.
Let $\radC q:=\CC\otimes_{\ZZ}\rad q$ and define complex vector
spaces $N_\alpha=N(q)_\alpha$ for $\alpha\in R$ by 
\[ 
N_\alpha:=\begin{cases}
  \CC e_{\alpha} &\text{if } \alpha\in R^\times,\\
  \CC^n/\CC\alpha &\text{if } \alpha\in R^0\setminus\{0\},\\
  \CC^n\oplus(\radC q)^* &\text{if } \alpha=0,
\end{cases}
\]
where $(\radC q)^*$ denotes the $\CC$-dual of $\radC q$ and define
\[
N=N(q):=\bigoplus_{\alpha\in R} N_\alpha 
\]
as a vector space. We can endow $N$ with a Lie algebra
structure as follows: 
Let $\pi_{\alpha}:\CC^n\rightarrow N_\alpha$ be the
canonical projection for each non-zero $\alpha\in R^0$; by
convention, $\pi_0(v)=v$ for all $v\in\CC^n$. 
Choose a non-symmetric bilinear form $B:\ZZ^n\times\ZZ^n\ra\ZZ$
such that $q(x)=B(x,x)$ for all $x\in\ZZ^n$ and set
$\epsilon(\alpha,\beta)=(-1)^{B(\alpha,\beta)}$. 
Finally, choose a projection $\rho:\CC^n\to\radC q$ and
define the following bracket rules that depend on the choice of
$B$ and $\rho$.

Let $\alpha,\beta\in R^\times$, $\sigma,\tau\in R^0$,
$v,w\in\CC^n$ and $\xi,\zeta\in(\radC q)^*$:
\begin{align}
  \tag{B1}
  [\pi_{\sigma}(v),\pi_{\tau}(w)] &=
  \epsilon(\sigma,\tau)q(v,w)\pi_{\sigma+\tau}(\sigma)\\
  \tag{B2} 
  [\pi_{\sigma}(v),e_{\beta}] &= -[e_{\beta},\pi_{\sigma}(v)] = 
  \epsilon(\sigma,\beta)q(v,\beta)e_{\beta+\sigma}.\\
  \tag{B3}
  [e_{\alpha},e_{\beta}] &= 
  \begin{cases}
    \epsilon(\alpha,\beta)e_{\alpha+\beta}& \text{if
    }\alpha+\beta \in R^\times,\\ 
    \epsilon(\alpha,\beta)\pi_{\alpha+\beta}(\alpha)& \text{if
    }\alpha+\beta\in R^0,\\ 
    0 &\text{otherwise.} 
  \end{cases}\\
  \tag{B4} 
  [\xi,e_{\beta}]&=-[e_{\beta},\xi] = \xi(\rho(\beta))e_{\beta}.\\ 
  \tag{B5} 
  [\xi,\pi_{\tau}(w)]&= -[\pi_{\tau}(w),\xi] =
  \xi(\rho(\tau))\pi_{\tau}(w).\\ 
  \tag{B6}
  [\xi,\zeta]&=0.
\end{align}

\begin{remark}
  \label{rmk:anticommutativity}
  If $\alpha,\beta\in R^\times$ are such that
  $\alpha+\beta \in R^0 = \rad q$, then
  $\pi_{\alpha+\beta}(\beta)=-\pi_{\alpha+\beta}(\alpha)$.
  Hence the above bracket is anticommutative.
\end{remark}

\begin{proposition}
  The rules (B1)-(B6) define a Lie algebra structure on
  $N$ which is independent of the choice of $B$ and $\rho$.
\end{proposition}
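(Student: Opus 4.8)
The plan is to check, in order, that the formulas (B1)–(B6) extend bilinearly to a well-defined bracket on $N$, that this bracket is anticommutative, that it satisfies the Jacobi identity, and that the resulting Lie algebra is, up to a graded isomorphism fixing $N_0$, independent of $B$ and $\rho$. Well-definedness only requires observing that the right-hand sides of (B1) and (B2) do not depend on the representatives $v,w\in\CC^n$ of the classes $\pi_\sigma(v),\pi_\tau(w)$: replacing $v$ by $v+c\sigma$ changes neither $q(v,w)$ nor $q(v,\beta)$ since $\sigma\in\rad q$. Moreover $q(\beta+\sigma)=q(\beta)$ and $q(\sigma+\tau)=0$, so each bracket lands in some $N_\mu$ with $\mu\in R$ (it is $0$ if $\alpha+\beta\notin R$), and the symbols $e_{\beta+\sigma}$ and $\pi_{\alpha+\beta}(\,\cdot\,)$ in (B2)–(B3) are legitimate. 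Anticommutativity is built into (B2), (B4), (B5), (B6); for (B3) and (B1) one invokes the identity $B(x,y)+B(y,x)=q(x,y)$, which follows from $q(x)=B(x,x)$ and gives $\epsilon(x,y)\epsilon(y,x)=(-1)^{q(x,y)}$. Hence if $\alpha+\beta\in R^\times$ then $q(\alpha,\beta)=-1$ and $\epsilon(\beta,\alpha)=-\epsilon(\alpha,\beta)$, while if $\alpha+\beta\in R^0$ then $q(\alpha,\beta)=-2$, so $\epsilon(\beta,\alpha)=\epsilon(\alpha,\beta)$ and $\pi_{\alpha+\beta}(\beta)=-\pi_{\alpha+\beta}(\alpha)$ by Remark \ref{rmk:anticommutativity}; in either case $[e_\alpha,e_\beta]=-[e_\beta,e_\alpha]$. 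The argument for (B1) is the same, using $q(\sigma,\tau)=0$ and $\pi_{\sigma+\tau}(\sigma)=-\pi_{\sigma+\tau}(\tau)$. In particular $[x,x]=0$ for all $x\in N$.

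For the Jacobi identity I would use two generalities valid for any anticommutative bracket: the Jacobian $\operatorname{Jac}(x,y,z):=[[x,y],z]+[[y,z],x]+[[z,x],y]$ is alternating in $x,y,z$, and $\operatorname{Jac}(x,y,z)=0$ for all $y,z$ as soon as $\ad x$ is a derivation of $N$. The key reduction is that $\ad x$ is a derivation for every $x\in N_0=\CC^n\oplus(\radC q)^*$. Indeed, by (B4)–(B6) the operator $\ad\xi$ with $\xi\in(\radC q)^*$ acts on each homogeneous component $N_\alpha$ as the scalar $\xi(\rho(\alpha))$, with the convention that $\rho$ is the identity on $\radC q$; this scalar vanishes on $N_0$ and is additive in $\alpha$, so $\ad\xi$ is a derivation. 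By (B1)–(B5) the operator $\ad v$ with $v\in\pi_0(\CC^n)$ acts on $N_\alpha$ as $q(v,\alpha)$ if $\alpha\in R^\times$ and as $0$ if $\alpha\in R^0$, and since $q(v,-)$ is additive and kills $\rad q$ this is again an additive-in-$\alpha$ scalar, so $\ad v$ is a derivation. Therefore $\operatorname{Jac}$ vanishes on any triple meeting $N_0$, and it remains to treat homogeneous $x\in N_\alpha$, $y\in N_\beta$, $z\in N_\gamma$ with $\alpha,\beta,\gamma\in R\setminus\{0\}$; every term is homogeneous of degree $\alpha+\beta+\gamma$, so we may assume $\alpha+\beta+\gamma\in R$. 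If $\alpha,\beta,\gamma$ are all isotropic, each double bracket vanishes because $q(\alpha,-)=0$, so $\operatorname{Jac}=0$. In the remaining cases one expands with (B1)–(B3) and collects terms using the cocycle identity $\epsilon(\alpha,\beta)\epsilon(\alpha+\beta,\gamma)=\epsilon(\beta,\gamma)\epsilon(\alpha,\beta+\gamma)$, the vanishing of $q$ on $\rad q$, and, when a pairwise sum of the roots is isotropic, the linearity of $\pi_\sigma$ together with $\pi_\sigma(\sigma)=0$.

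The genuinely laborious case is $\alpha,\beta,\gamma\in R^\times$, where one is essentially verifying the Jacobi identity for a Chevalley-type basis, complicated by the subcases in which one of $\alpha+\beta$, $\beta+\gamma$, $\gamma+\alpha$, $\alpha+\beta+\gamma$ is isotropic, so that the corresponding bracket lands in a $\pi$-space rather than in a line $\CC e$; there Remark \ref{rmk:anticommutativity} enters again. I expect this to be the main obstacle, though each subcase reduces to a direct manipulation of the cocycle identity and of the identities listed above — the relevant feature of a connected non-negative unit form (its simply-laced Dynkin type, Theorem \ref{thm:Dynkin-types}) being exactly what makes the constants implicit in (B1)–(B3) compatible with Jacobi. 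One should not forget the subcase $\alpha+\beta+\gamma=0$, where the double brackets land in $N_0$ and the computation recovers the fact that $[e_\alpha,e_{-\alpha}]$ is, up to sign, $\alpha$ itself in $\pi_0(\CC^n)$.

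Finally, for the independence of the auxiliary data: any two choices of $B$ as in the construction differ by an alternating integral bilinear form $A$, so the associated sign functions satisfy $\epsilon'=\epsilon\cdot\eta$ with $\eta(\alpha,\beta)=(-1)^{A(\alpha,\beta)}$; this $\eta$ is a symmetric $\{\pm1\}$-valued $2$-cocycle on $\ZZ^n$, hence a coboundary $\eta=\delta\mu$ for some $\mu\colon\ZZ^n\to\CC^\times$, and one checks from (B1)–(B3) that $e_\alpha\mapsto\mu(\alpha)e_\alpha$, $\pi_\sigma(v)\mapsto\mu(\sigma)\pi_\sigma(v)$, extended by the identity on $N_0$, is a graded isomorphism between the two constructions. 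Likewise, two projections $\rho,\rho'\colon\CC^n\to\radC q$ agree on $\radC q$, so for each $\xi\in(\radC q)^*$ the linear functional $\beta\mapsto\xi\big((\rho-\rho')(\beta)\big)$ on $\CC^n$ vanishes on $\rad q$ and hence is of the form $q(v_\xi,-)$ for some $v_\xi\in\CC^n$ that can be chosen linearly in $\xi$; then the map which is the identity on every $N_\alpha$ with $\alpha\neq0$ and on $\pi_0(\CC^n)$ and sends $\xi\in(\radC q)^*$ to $\xi+v_\xi$ is a graded isomorphism between the algebras built with $\rho$ and with $\rho'$, as one verifies on (B4) and (B5). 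This yields both the Lie algebra structure and its independence of $B$ and $\rho$.
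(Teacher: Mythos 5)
Your overall strategy is sound, and it is worth noting that it diverges from the paper's proof in an instructive way. The paper does almost no work here: it quotes \cite[Prop.~2.2]{BKL} for the fact that the rules (B1)--(B3) (restricted to $\bigoplus_{\alpha}N_\alpha$ without the $(\radC q)^*$ summand) define a Lie algebra independent of $B$, and then verifies the Jacobi identity only in the two new configurations where some argument lies in $(\radC q)^*$, citing \cite[Prop.~5.3(i)]{BKL} for $\rho$-independence. You instead rebuild everything from scratch. The part of your argument covering the paper's actual new content is correct and in fact cleaner than the paper's computation: observing that $\ad\xi$ for $\xi\in(\radC q)^*$ and $\ad\pi_0(v)$ for $v\in\CC^n$ both act on each $N_\alpha$ by a scalar that is additive in $\alpha$ (namely $\xi(\rho(\alpha))$, which is well defined and additive because $\rho$ restricts to the identity on $\radC q$, and $q(v,\alpha)$) makes them derivations of any graded bracket, which kills every Jacobi triple meeting $N_0$ in one stroke. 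Your treatment of well-definedness, anticommutativity via $B(x,y)+B(y,x)=q(x,y)$, the all-isotropic triples, and the independence of both $B$ (coboundary of a symmetric bimultiplicative cocycle) and $\rho$ (correcting $\xi$ by a vector $v_\xi$ with $q(v_\xi,-)=\xi\circ(\rho'-\rho)$) are all correct.

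The gap is the case you yourself flag: three anisotropic roots $\alpha,\beta,\gamma\in R^\times$. You state that you ``expect'' each subcase to reduce to the cocycle identity, but you do not carry out any of them, and this is precisely the nontrivial computation --- it is the entire content of \cite[Prop.~2.2]{BKL}, which is why the paper can afford to cite it rather than reprove it. To close it you would have to enumerate the subcases according to which of $\alpha+\beta$, $\beta+\gamma$, $\gamma+\alpha$, $\alpha+\beta+\gamma$ lie in $R^\times$, in $R^0$, or outside $R$, using that $q(\alpha,\beta)\in\set{-2,-1,0,1,2}$ for $\alpha,\beta\in R^\times$ (this follows from non-negativity alone, as in the proof of Definition--Proposition~\ref{prop:Rq}; the simply-laced Dynkin type is not what is needed), and in each subcase match signs via $\epsilon(\alpha,\beta)\epsilon(\alpha+\beta,\gamma)=\epsilon(\beta,\gamma)\epsilon(\alpha,\beta+\gamma)$ together with the relation $\pi_\sigma(\alpha)+\pi_\sigma(\beta)+\pi_\sigma(\gamma)=0$ when $\sigma=\alpha+\beta+\gamma\in R^0$. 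The most delicate subcases are those where exactly one pairwise sum is isotropic, so that one double bracket lands in a quotient space $\CC^n/\CC\sigma$ while the others land in root lines. As written, your proof is therefore incomplete at exactly the point where a citation (or a page of case analysis) is unavoidable; everything surrounding that point is correct.
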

\begin{proof}
  The fact that $N$ is a Lie algebra and that the Lie bracket
  defined above is independent of the choice of $B$ is shown
  in \cite[Prop. 2.2]{BKL}.
  We only need to verify Jacobi's identity
  \[ [x,[y,z]] + [y,[z,x]] + [z,[x,y]] =0 \]
  for homogeneous elements $x,y,z\in N$ when at least one of them
  belong to $(\radC q)^*$.
  Let $\xi,\zeta\in (\radC q)^*$ and let
  $x\in N_\alpha$ and $y\in N_\beta$ for some non-zero 
  $\alpha\in R$.
  We have two cases: 
  \begin{align*}
    [\xi,[\zeta,x]] + [\zeta,[x,\xi]] + [x,[\xi,\zeta]] &=
    \zeta(\rho(\alpha))[\xi,x] - \xi(\rho(\alpha))[\zeta,x]\\
    &= \zeta(\rho(\alpha))\xi(\rho(\alpha))x -
    \xi(\rho(\alpha))\zeta(\rho(\alpha))x
    = 0, 
  \end{align*}
  and since $[x,y] \in N_{\alpha + \beta}$, we have that
  \[
  [\xi,[x,y]] + [x,[y,\xi]] + [y,[\xi,x]] =
  \xi(\rho(\alpha+\beta))[x,y] - \xi(\rho(\beta))[x,y]
  + \xi(\rho(\alpha))[y,x] = 0.
  \]

  Finally, that the Lie algebra structure of $N$ is independent of
  the choice of $\rho$ can be shown exactly as in the proof of
  \cite[Prop. 5.3(i)]{BKL}.
\end{proof}

The Lie algebra $N$ is close to being an extended affine Lie
algebra, but it lacks a non-degenerate invariant bilinear form.
To achieve this we must consider the following quotient of $N$.

\begin{definition}
Let $E=E(q)$ be the factor Lie algebra of $N$ induced by the
canonical epimorphisms $N_\alpha\to\CC^n/\radC q$ for 
$\alpha\in R^0\setminus\set{0}$.
Thus the root spaces of $E$ are given by
\[ 
E_\alpha=\begin{cases}
  \CC e_{\alpha} &\text{if } \alpha\in R^\times,\\
  \CC^n/\radC q &\text{if } \alpha\in R^0\setminus\set{0},\\
  \CC^n\oplus(\radC q)^* &\text{if } \alpha=0.
\end{cases}
\]
\end{definition}

\begin{remark}
With some abuse of notation we also denote by $\pi_\alpha$ the
composition $\CC^n\xrightarrow{\pi_\alpha} N_\alpha \to \CC^n/\radC q$.
Thus rule (B1) induces the following rule on $E$: 
\[
[\pi_{\sigma}(v),\pi_{\tau}(w)] =
\epsilon(\sigma,\tau)q(v,w)\pi_{\sigma+\tau}(\sigma)
= \begin{cases}
  q(v,w)\pi_{0}(\sigma) &\text{if }\sigma+\tau=0,\\
  0 &\text{otherwise}
\end{cases}
\]
where $\sigma,\tau\in R^0$ and $v,w\in\CC^n$.
\end{remark}

\subsection{The invariant bilinear form on $E(q)$}

We define a bilinear form on $E$ by the following rules: 
\begin{align*}
  (e_{\alpha},-e_{-\alpha})&=1 \for \alpha\in R^\times,\\
  (\pi_{\sigma}(v),\pi_{-\sigma}(w))&=q(v,w) \for \sigma\in R^0\
  \and\ v,w\in \CC^n,\\ 
  (\xi,\pi_{0}(w))&= (\pi_0(w),\xi)=\xi(\rho(w)) \for w\in \CC^n,
\end{align*}
and zero in other cases. 
We need the following properties of $\epsilon(-,-)$ which follow
directly from the definition.

\begin{lemma}
  \label{lema:sign}
  Let $\alpha,\beta\in R^\times$ and $\sigma,\tau\in R^0$. Then
  \begin{itemize}
  \item[(i)] $\epsilon(\alpha,\alpha)=-1$ and
    $\epsilon(\sigma,\sigma)=1$. 
  \item[(ii)] $\epsilon(\sigma,\beta) = \epsilon(\beta,\sigma)$
    and $\epsilon(\sigma,\tau) = \epsilon(\tau,\sigma)$. 
  \item[(iii)]
    $\epsilon(\alpha+\sigma,\beta+\tau) =
    \epsilon(\alpha,\beta)\epsilon(\alpha,\tau)
    \epsilon(\sigma,\beta)\epsilon(\sigma,\tau)$. 
  \item[(iv)] If $\alpha+\beta\in R^\times$ then
    $\epsilon(\alpha,\beta) = -\epsilon(\beta,\alpha)$. 
  \item[(v)] If $\alpha+\beta\in R^0$ then
    $\epsilon(\alpha,\beta) = \epsilon(\beta,\alpha)$. 
  \end{itemize}
\end{lemma}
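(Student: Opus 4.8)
The plan is to derive everything from the single defining identity
$\epsilon(x,y) = (-1)^{B(x,y)}$ together with the two facts that $B$ is
$\ZZ$-bilinear and that $q(x) = B(x,x)$ for all $x \in \ZZ^n$. The latter
gives $B(x,x) = q(x)$, so that $B(x,x)$ is even precisely when $q(x)$ is
even; in particular $B(\alpha,\alpha) = q(\alpha) = 1$ is odd for
$\alpha \in R^\times$, while $B(\sigma,\sigma) = q(\sigma) = 0$ is even for
$\sigma \in R^0$, which is exactly statement (i). For the remaining parts it
is convenient to record the ``polarization'' identity
\[
B(x,y) + B(y,x) = B(x+y,x+y) - B(x,x) - B(y,y) = q(x+y) - q(x) - q(y)
= q(x,y),
\]
valid for all $x,y \in \ZZ^n$, since $2q(z) = q(z,z)$ and
$q(x+y) - q(x) - q(y) = q(x,y)$.

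First I would do (ii). If $\sigma \in R^0 = \rad q$ then $q(\sigma, y) = 0$
for every $y$, so the polarization identity gives
$B(\sigma, \beta) + B(\beta, \sigma) = 0$, whence
$B(\sigma,\beta) = -B(\beta,\sigma)$ and therefore
$(-1)^{B(\sigma,\beta)} = (-1)^{-B(\beta,\sigma)} = (-1)^{B(\beta,\sigma)}$,
i.e. $\epsilon(\sigma,\beta) = \epsilon(\beta,\sigma)$; the same computation
with $\tau$ in place of $\beta$ gives the second assertion of (ii). For (iii),
expand $B(\alpha+\sigma,\beta+\tau)$ by bilinearity into the four summands
$B(\alpha,\beta) + B(\alpha,\tau) + B(\sigma,\beta) + B(\sigma,\tau)$ and
apply $(-1)^{(-)}$ to turn the sum into the product
$\epsilon(\alpha,\beta)\epsilon(\alpha,\tau)\epsilon(\sigma,\beta)\epsilon(\sigma,\tau)$.

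For (iv) and (v): by polarization,
$B(\alpha,\beta) + B(\beta,\alpha) = q(\alpha,\beta)$, so
$\epsilon(\alpha,\beta)\epsilon(\beta,\alpha) = (-1)^{q(\alpha,\beta)}$,
and it remains to determine the parity of $q(\alpha,\beta)$ from the parity of
$q(\alpha+\beta)$. Since $q(\alpha+\beta) = q(\alpha) + q(\beta) + q(\alpha,\beta)
= 2 + q(\alpha,\beta)$ when $\alpha,\beta \in R^\times$, we get
$q(\alpha,\beta) \equiv q(\alpha+\beta) \pmod 2$: if $\alpha+\beta \in R^\times$
then $q(\alpha+\beta) = 1$ is odd, forcing
$\epsilon(\alpha,\beta)\epsilon(\beta,\alpha) = -1$, which is (iv); if
$\alpha+\beta \in R^0$ then $q(\alpha+\beta) = 0$ is even, forcing
$\epsilon(\alpha,\beta)\epsilon(\beta,\alpha) = 1$, which (using
$\epsilon(\beta,\alpha)^2 = 1$) is (v). There is no real obstacle here — the
lemma is a bookkeeping exercise — but the one point deserving care is the
legitimacy of writing $(-1)^{-m} = (-1)^m$ and, more importantly, the
systematic use of the identities $q(x) = B(x,x)$ and
$q(x+y) - q(x) - q(y) = q(x,y)$ to convert statements about $B$ (which is not
symmetric) into statements about the symmetric form $q(-,-)$ and about the
values of $q$ on roots; once those translations are in place every item follows
in one line.
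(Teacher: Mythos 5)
Your proof is correct and is precisely the routine verification the paper has in mind: the paper gives no proof at all, stating only that the properties ``follow directly from the definition,'' and your systematic use of $B(x,x)=q(x)$, bilinearity, and the polarization identity $B(x,y)+B(y,x)=q(x,y)$ is the intended argument. Nothing further is needed.
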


\begin{proposition}
  \label{prop:invariant_form}
  The symmetric bilinear form $\B:E\times E\to \CC$ is
  invariant and non-degenerate. 
\end{proposition}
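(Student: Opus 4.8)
The plan is to settle, in order, that $\B$ is well defined on the quotient $E$, that it is non-degenerate, and that it is invariant; symmetry being immediate. Since $E_\sigma=\CC^n/\radC q$ for $\sigma\in R^0\setminus\set{0}$, one must check that $(\pi_\sigma(v),\pi_{-\sigma}(w))=q(v,w)$ does not depend on the chosen representatives, and this is exactly the defining property of $\rad q$; every other pairing involves no quotient. Symmetry is then read off the defining rules: $(e_{-\alpha},e_\alpha)=-1=(e_\alpha,e_{-\alpha})$ (apply the first rule with $\alpha$ replaced by $-\alpha$), $q(v,w)=q(w,v)$, the rule for $(\radC q)^*$ is symmetric by fiat, and on $E_0=\CC^n\oplus(\radC q)^*$ the form reads $((v,\xi),(w,\zeta))=q(v,w)+\xi(\rho(w))+\zeta(\rho(v))$.

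For non-degeneracy, since $(E_\alpha,E_\beta)=\set{0}$ whenever $\beta\neq-\alpha$, it suffices to show the pairing $E_\alpha\times E_{-\alpha}\to\CC$ is non-degenerate for each $\alpha\in R$. For $\alpha\in R^\times$ this is clear; for $\alpha\in R^0\setminus\set{0}$ the pairing is the bilinear form induced by $q$ on $\CC^n/\radC q$, which is non-degenerate precisely by the definition of $\rad q$. For $\alpha=0$, suppose $(v,\xi)\in E_0$ is orthogonal to all of $E_0$: pairing against $(0,\zeta)$ for arbitrary $\zeta$ forces $\rho(v)=0$, \ie $v\in\ker\rho$; pairing against $(w,0)$ with $w\in\radC q$ forces $\xi(w)=0$ for all such $w$, hence $\xi=0$; and pairing against $(w,0)$ with $w\in\ker\rho$ forces $q(v,w)=0$. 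Since $\CC^n=\radC q\oplus\ker\rho$ and $q(v,-)$ already vanishes on $\radC q$, this gives $v\in\radC q\cap\ker\rho=\set{0}$.

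For invariance, $([x,y],z)=(x,[y,z])$ need only be checked for homogeneous $x\in E_\alpha$, $y\in E_\beta$, $z\in E_\gamma$, and both sides vanish unless $\alpha+\beta+\gamma=0$, which I assume. I then run through the possible types of $(\alpha,\beta,\gamma)$, each root being non-isotropic, non-zero isotropic, or $0$. Most patterns are vacuous: if two of the three roots are isotropic then the third, being minus their sum, is isotropic too and so cannot be non-isotropic; and patterns with two or three of the roots equal to $0$ force a non-zero root to vanish. The pattern with all three roots non-zero isotropic forces $[x,y]=0$ in $E$, since there $\pi_{\alpha+\beta}(\alpha)=0$, and the pattern with all three roots $0$ is trivial because $E_0$ is abelian. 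This leaves four substantive cases: (a) $\alpha,\beta,\gamma\in R^\times$ (so $\alpha+\beta=-\gamma\in R^\times$); (b) two of the roots in $R^\times$ and the third the isotropic root $-\alpha-\beta\in R^0\setminus\set{0}$, in its three cyclic arrangements; (c) two mutually opposite roots in $R^\times$ and the third $0$; and (d) two mutually opposite roots in $R^0\setminus\set{0}$ and the third $0$. In each case one substitutes the rules (B1)--(B6) and the definition of $\B$, and the asserted equality reduces to an identity among the signs $\epsilon(-,-)$, checked using Lemma \ref{lema:sign} together with $\epsilon(a,b)=(-1)^{B(a,b)}$, the bilinearity of $B$, and $q(a)=B(a,a)$. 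For instance, in (a) the two sides are $-\epsilon(\alpha,\beta)$ and $-\epsilon(\beta,\gamma)$, which coincide since $\epsilon(\beta,\gamma)=\epsilon(\beta,\alpha+\beta)=(-1)^{q(\beta)}\epsilon(\beta,\alpha)=-\epsilon(\beta,\alpha)$ and $-\epsilon(\beta,\alpha)=\epsilon(\alpha,\beta)$ by Lemma \ref{lema:sign}(iv); in (b), with $\sigma=-\alpha-\beta$, the two sides are $\epsilon(\alpha,\beta)q(\alpha,w)$ and $\epsilon(\sigma,\beta)q(w,\beta)$, which agree because $q(\alpha,w)=-q(\beta,w)$ (as $\alpha+\beta\in\rad q$) while $\epsilon(\sigma,\beta)=-\epsilon(\alpha,\beta)$; and in (d) both sides reduce to $q(u,v)\,\xi(\sigma)$, using that $[\pi_{-\sigma}(v),\pi_0(w)]=0$ in $E$.

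The only real obstacle is the sign bookkeeping in cases (a) and (b): one must keep straight the alternation $\epsilon(\alpha,\beta)=-\epsilon(\beta,\alpha)$ when $\alpha+\beta\in R^\times$ against the symmetry $\epsilon(\alpha,\beta)=\epsilon(\beta,\alpha)$ when $\alpha+\beta\in R^0$ (Lemma \ref{lema:sign}(iv),(v)), together with the factor $(-1)^{q(\delta)}=-1$, $\delta\in R^\times$, produced whenever a non-isotropic root is absorbed into the second argument of $\epsilon$. The passage to the quotient $E$, far from being an obstacle, simplifies the computations: it annihilates $[\pi_\sigma(v),\pi_\tau(w)]$ whenever $\sigma+\tau\in R^0\setminus\set{0}$ and forces $\pi_\sigma(\sigma)=0$; and the rules (B4)--(B6) involving the summand $(\radC q)^*$ of $E_0$ enter only in the cases with $\alpha=0$ and present no difficulty.
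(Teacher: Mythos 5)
Your proposal is correct and follows essentially the same route as the paper: the same defining rules for $\B$, non-degeneracy checked via the pairing of $E_\alpha$ with $E_{-\alpha}$ for each $\alpha\in R$, and invariance verified case by case on the degree types using Lemma \ref{lema:sign} and the identities $\epsilon(a,b)=(-1)^{B(a,b)}$, $q(a)=B(a,a)$. Your treatment is in fact a touch more complete in two minor spots the paper glosses over — well-definedness of the form on the quotient $E_\sigma=\CC^n/\radC q$, and non-degeneracy on the full space $E_0=\CC^n\oplus(\radC q)^*$ via the decomposition $\CC^n=\radC q\oplus\ker\rho$ — but the overall argument is the same.
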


\begin{proof} 
  We first show that $\B$ is non-degenerate. 
  Let $x\in E_\alpha$ be non-zero. 
  If $\alpha\in R^\times$ then $x=\lambda e_\alpha$ for
  some non-zero $\lambda\in\CC$.
  Hence $(x,-e_{-\alpha})=\lambda\neq 0$.
  Now suppose that $\alpha\in R^0$ is
  non-zero and $x=\pi_\alpha(v)$ for some $v\notin\radC q$. 
  Then there exists some non-zero $w\in \CC^n$ such that
  \[
  (x,\pi_{-\alpha}(w)) = (\pi_{\alpha}(v),\pi_{-\alpha}(w))=q(v,w)
  \neq 0.
  \]
  Finally, suppose that $\alpha=0$ and that $x\in \radC q$
  (resp. $x\in(\radC q)^*$).
  Then it is immediate that there exist some $y\in(\radC q)^*$
  (resp. $y\in \radC q$) such that $(x,y)\neq 0$.
  This shows that the form $\B$ is non-degenerate. 

  Now we prove that the form is invariant.
  Observe that it is
  sufficient to prove that 
  \[ (x,[y,z])=([x,y],z) \]
  for homogeneous elements $x,y,z\in E$ such that $\deg x+\deg
  y+\deg z=0$, where $\deg x=d$ for $x\in E_d$ as the form
  vanishes in other cases. 
  In order to prove this, we distinguish various cases.
  First, we treat the case where $x,y,z\notin (\radC q)^*$.
  Let $\alpha,\beta,\gamma\in R^\times$, $\sigma,\tau,\kappa\in
  R^0$ and $u,v,w\in\CC^n$.

  (i) Since $q(\sigma,-)=q(-,\tau)=0$ we have

  \begin{align*}
    (\pi_{\sigma}(u),[\pi_{\tau}(v),\pi_{\kappa}(w)]) =&
    (\pi_{\sigma}(u),\epsilon(\tau,\kappa)q(v,w)\pi_{\tau+\kappa}(\tau))\\
    =&\epsilon(\tau,\kappa)q(v,w)q(u,\tau)=0
  \end{align*}
    and
    \begin{align*}
    ([\pi_{\sigma}(u),\pi_{\tau}(v)],\pi_{\kappa}(w)) =&
    (\epsilon(\sigma,\tau)q(u,v)\pi_{\sigma+\tau}(\sigma),\pi_{\kappa}(w))\\
    =& \epsilon(\sigma,\tau)q(u,v)q(\sigma,w)=0.
    \end{align*}

  (ii) We have
  \[
  (\pi_{\sigma}(u),[e_\beta,e_\gamma]) =
  (\pi_{\sigma}(u),\epsilon(\beta,\gamma)\pi_{\beta+\gamma}(\beta))
  = \epsilon(\beta,\gamma)q(u,\beta).
  \]
  On the other hand, since $\sigma+\beta+\gamma=0$, we have
  \begin{align*}
    ([\pi_{\sigma}(u),e_\beta],e_\gamma) =&
    (\epsilon(\sigma,\beta)q(u,\beta)e_{\beta+\sigma},e_\gamma)
    = -\epsilon(\sigma,\beta)q(u,\beta)\\
    =& -\epsilon(-(\beta+\gamma),\beta)q(u,\beta)
    = -\epsilon(\beta,\beta)\epsilon(\gamma,\beta)q(u,\beta)\\
    =& \epsilon(\gamma,\beta)q(u,\beta).
  \end{align*}
  Since $\gamma+\beta\in R^0$, by Lemma \ref{lema:sign}(v) we have
  $\epsilon(\beta,\gamma)=\epsilon(\gamma,\beta)$.
  Thus we have the required equality in this case.

  (iii) We have
  \[
  (e_{\alpha},[e_\beta,e_{\gamma}]) =
  \epsilon(\beta,\gamma)q(\beta,\gamma)(e_\alpha,e_{-\alpha})
  = -\epsilon(\beta,\gamma)q(\beta,\gamma).
  \]
  On the other hand, since $\alpha+\beta+\gamma=0$, we have
  \begin{align*}
    ([e_{\alpha},e_\beta],e_{\gamma}) =&
    \epsilon(\alpha,\beta)q(\alpha,\beta)(e_{-\gamma},e_{\gamma})
    = -\epsilon(\alpha,\beta)q(\alpha,\beta)\\
    =& -\epsilon(-(\beta+\gamma),\beta)q(\alpha,\beta)
    = -\epsilon(\beta,\beta)\epsilon(\gamma,\beta)q(\alpha,\beta)\\
    =& \epsilon(\gamma,\beta)q(\alpha,\beta).
  \end{align*}
  Since $\beta+\gamma\in R^\times$, by Lemma \ref{lema:sign}(iv)
  we have $-\epsilon(\beta,\gamma)=\epsilon(\gamma,\beta)$.
  Finally, we have 
  \[ 
  q(\alpha,\beta)-q(\gamma,\beta) =
  q(\alpha,-\alpha-\gamma)-q(\gamma,-\alpha-\gamma)=0,
  \]
  since $\beta=-(\alpha+\gamma)$.
  Hence $q(\beta,\gamma)=q(\alpha,\beta)$.
  Thus we have the required equality in this case.

  We now consider the case when at least one of the elements
  involved lies in $(\radC q)^*$. We must show that
  \[
  (\xi,[x_{\alpha},x_{-\alpha}])=([\xi,x_{\alpha}],x_{-\alpha}) \] 
  for $\xi\in (\radC q)^*,\
  x_{\varepsilon\alpha}\in E(q)_{\varepsilon\alpha},\
  \alpha\in R$ and $\varepsilon=\pm1$. There are two non-trivial
  cases: 

  (i) Let $\xi\in (\radC q)^*,\ v,w\in \CC^n$ and let $\sigma \in
  R^0$ be non-zero. Then 
  \begin{align*}
    (\xi,[\pi_{\sigma}(v),\pi_{-\sigma}(w)]) &=
    q(v,w)(\xi,\pi_{0}(\sigma)) = q(v,w)\xi(\rho(\sigma))\text{ and}\\
    ([\xi,\pi_{\sigma}(v)],\pi_{-\sigma}(w)) &=
    \xi(\rho(\sigma))(\pi_{\sigma}(v),\pi_{-\sigma}(w)) = \xi(\rho(\sigma))q(v,w).
  \end{align*}
  (ii) Let $\xi\in (\radC q)^*$ and let $\alpha \in R^\times$. Then
  \begin{align*}
    (\xi,[e_{\alpha},e_{-\alpha}]) &=
    \epsilon(\alpha,\alpha)(\xi,\pi_{0}(\alpha)) =
    -\xi(\rho(\alpha))\text{ and}\\ 
    ([\xi,e_{\alpha}],e_{-\alpha}) &=
    \xi(\rho(\alpha))(e_{\alpha},e_{-\alpha}) =
    -\xi(\rho(\alpha)).\qedhere 
  \end{align*}
\end{proof}

The following is the main result of this section.

\begin{theorem}
  \label{thm:Eq-is-EALA}
  Let $\q$ be a connected non-negative unit form.
  Then the triple $(E(q),\B,\H)$ is an extended affine Lie algebra
  with $R(q)$ as root system.
\end{theorem}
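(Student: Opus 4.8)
The plan is to verify the axioms (EA1)--(EA5) for the triple $(E(q),\B,\H)$, taking for $\H$ the degree-zero space $E_0=\CC^n\oplus(\radC q)^*$. The preliminary task is to describe the root-space decomposition of $E$ with respect to $\H$. First, $\H$ is abelian: this is clear from (B6), and from (B1) and (B5) because $\pi_0(0)=0$ and $\rho(0)=0$. Define $\iota\colon\CC^n\to\H^*=(\CC^n)^*\oplus\radC q$ by $\iota(\alpha)=\bigl(q(-,\alpha),\rho(\alpha)\bigr)$, so that $\iota(\alpha)\bigl(\pi_0(v)+\xi\bigr)=q(v,\alpha)+\xi(\rho(\alpha))$. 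Using (B2), (B4) and (B5) one checks directly that $[h,x]=\iota(\alpha)(h)\,x$ for all $h\in\H$ and $x\in E_\alpha$; hence each $\ad_E h$ is diagonalizable and the decomposition $E=\bigoplus_{\alpha\in R(q)}E_\alpha$ given by construction is exactly the root-space decomposition with respect to $\H$. Moreover $\iota$ is injective, since $\iota(\alpha)=0$ forces $\alpha\in\radC q$, hence $\rho(\alpha)=\alpha$, hence $\alpha=0$; and each $E_\alpha$ with $\alpha\in R(q)$ is non-zero (for isotropic $\alpha\neq 0$ this uses $\CC^n\neq\radC q$, which holds because $q(c_i)=1$). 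Therefore the root system of $E(q)$ is $\iota(R(q))$, which we identify with $R(q)$.

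Next I would check that $\iota$ is an isometry onto its image, where $\H^*$ carries the form transferred from $\H$ via $h\mapsto(h,-)$. Unwinding the definition of $\B$ on $\H$, the preimage of $\iota(\alpha)$ under this isomorphism is $\pi_0(\alpha)$, so $(\iota(\alpha),\iota(\beta))=(\pi_0(\alpha),\pi_0(\beta))=q(\alpha,\beta)$; in particular $(\iota(\alpha),\iota(\alpha))=2q(\alpha)$, so the non-isotropic (resp. isotropic) roots of $E(q)$ are exactly $\iota(q^{-1}(1))=\iota(R(q)^\times)$ (resp. $\iota(q^{-1}(0))=\iota(R(q)^0)$), in agreement with Proposition \ref{prop:Rq}.

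With these facts the remaining axioms are short. (EA1) is Proposition \ref{prop:invariant_form}; as noted in Section \ref{sec:ealas}, invariance plus diagonalizability of the $\ad h$ automatically makes $\B|_{\H}$ non-degenerate. For (EA2), $\H$ is a non-trivial finite-dimensional abelian subalgebra, and it equals its own centralizer because that centralizer is $\bigoplus_{\iota(\alpha)=0}E_\alpha=E_0=\H$ by injectivity of $\iota$. For (EA4), $R(q)$ is discrete in $\RR^n$ by Proposition \ref{prop:Rq}, and $\iota$ restricts to an injective real-linear map $\RR^n\to\H^*$, so $\iota(R(q))$ is discrete in $\H^*$. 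For (EA5), a decomposition $R(q)^\times=R_1\amalg R_2$ with $(R_1,R_2)=\0$ is impossible by axiom (R7) (Proposition \ref{prop:Rq}), and since $\iota$ preserves the form the same follows for the non-isotropic roots of $E(q)$.

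The remaining axiom, (EA3), is the one requiring a real argument. Let $\alpha\in R(q)^\times$ and $x_\alpha=\lambda e_\alpha\in E_\alpha$. Since $E$ is the direct sum of its root spaces it suffices to show that for every homogeneous $y\in E_\beta$ there is $N$ with $(\ad_E x_\alpha)^N y=0$; by (B2) and (B3), $(\ad_E e_\alpha)^k y\in E_{\beta+k\alpha}$ for all $k\geqslant 0$. The root-string computation carried out in the proof of Proposition \ref{prop:Rq}, together with the trivial case $\beta=0$ (where $q(n\alpha)=n^2q(\alpha)=n^2$), shows that $\{n\in\ZZ:\beta+n\alpha\in R(q)\}\subseteq\{-2,-1,0,1,2\}$, so $E_{\beta+k\alpha}=\0$ for $k\geqslant 3$ and thus $(\ad_E e_\alpha)^3 y=0$. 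Hence $(E(q),\B,\H)$ satisfies (EA1)--(EA5) and is an extended affine Lie algebra with root system $R(q)$. I expect the main obstacle to be essentially organizational: the map $\iota$ must be set up so that the bilinear form on $\H^*$, the splitting into $R^0$ and $R^\times$, and the discreteness all match the combinatorics of $q$ recorded in Proposition \ref{prop:Rq}; once that bookkeeping is in place, the one point of genuine content — local nilpotency — is already contained in that proposition's proof.
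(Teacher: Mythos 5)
Your proof is correct and takes essentially the same route as the paper: $\H=E_0$, (EA1) via Proposition \ref{prop:invariant_form}, and (EA3) from the finiteness of root strings $\beta+n\alpha$ established in Proposition \ref{prop:Rq}. The only difference is that you spell out the bookkeeping for (EA2), (EA4), (EA5) (the map $\iota$, the isometry, the centralizer computation) that the paper leaves implicit, and you make the root-string bound $n\in\{-2,\dots,2\}$ explicit where the paper just cites (R6).
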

\begin{proof}
  It is shown in Proposition \ref{prop:invariant_form} that there
  exists a non-degenerate invariant symmetric bilinear form
  $\B:E\times E\to\CC$.
  Let $\H=E_0$.
  Then the triple $(E,\B,\H)$ satisfies axioms (EA1), (EA2), (EA4)
  and (EA5) of extended affine Lie algebras,
  see Definition \ref{def:eala}.  
  It remains to show that $E$ satisfies axiom (EA3).
  For this, let $x_\alpha\in E_\alpha$ for some $\alpha\in
  R^\times$ and $x_\beta\in E_\beta$ for some $\beta\in R$.
  Then for each $n>0$ we have that 
  $(\ad x_\alpha)^n(x_\beta)\in E_{\beta+n\alpha}$. 
  But since $R$ is an extended affine root system it satisfies
  property (R6), hence $\beta+n\alpha\in R$ for only finitely many
  values of $n$.
  It follows that $E_{\beta+n\alpha}\neq\set{0}$ for only finitely
  many values of $n$, thus $\ad x_\alpha$ acts locally nilpotently
  on $E$.
  This shows that the triple $(E,\B,\H)$ satisfies axioms of
  extended affine Lie algebras.
\end{proof}

\section{A Kac-Moody-like Construction of $E(q)$}
\label{sec:kacmoody_construction}

We give an alternative construction of $E=E(q)$ as a quotient of the
Lie algebra $\tildeG=\tildeG(q)$ defined in \cite{BKL}. 
This is done by analogy to construction of the simply-laced affine
Kac-Moody algebras.
We recall the construction of $\tildeG$ and some
necessary results regarding the Lie algebra $\tildeG$.
We give the proof of Theorem \ref{thm:GKM} at the end
of the section. 

Let $\q$ be a connected non-negative unit form with symmetric
matrix $C\in\ZZ^{n\times n}$ and let $R:=R(q)$ be the root system
of $q$, see Definition-Proposition \ref{prop:Rq}.
We keep the notation of the previous section.
Let $F$ be the free Lie algebra on $3n$ generators
$\setP{f_{-i},h_i,f_{i}}{1\leqslant i\leqslant n}$ which are
homogeneous of degree $-c_{i},0,c_{i}$ respectively. 
Let $I(q)$ be the ideal of $F$ defined by the following
generalized Serre relations: 
\begin{enumerate}
\item[(G1)] $[h_i,h_j]=0$ for $i,j\in\set{1,\dots,n}$.
\item[(G2)] $[h_i,f_{\varepsilon j}] = \varepsilon
  C_{ij}f_{\varepsilon j}$ for $i,j\in\set{1,\dots,n}$ and 
  $\varepsilon=\pm 1$.  
\item[(G3)] $[f_{i},f_{-i}] = h_i$ for $i\in\set{1,\dots,n}$.
\item[(G$\infty$)] $[f_{\varepsilon_{t} i_{t}}, [f_{\varepsilon_{t-1} i_{t-1}},
  \dots,[f_{\varepsilon_{2} i_2},f_{\varepsilon_{1} i_{1}}]]=0$
  whenever
  $\sum_{k=1}^t\varepsilon_{k}c_{i_{k}}\notin R$ 
  for $\varepsilon_{k}=\pm 1$.  
\end{enumerate}
Let $\Gq=G(q):=F/I(q)$ and let $H$ be the abelian subalgebra of
$\Gq$ generated by $h_1,\ldots,h_n$. 

A \emph{monomial} in $\Gq$ is an element obtained from the
generators using the bracket only. 
Thus every monomial has a well defined degree. 
Let $\Gq_{\alpha}$ be
the space spanned by the monomials of degree $\alpha$. 
In general, the algebra $\Gq$ does not have a root space
decomposition, see \cite[Sec. 5]{BKL}. 
In order to achieve such a decomposition we need to extend
$F/I(q)$ by $(\radC q)^*$, the $\CC$-dual of $\radC q$. 
Let $\rho:\CC^n\rightarrow\radC q$ be any
projection. 
We define $\tildeG:=F/I(q)\oplus (\radC q)^*$ and we let 
$\H=H\oplus(\radC q)^*$. 
We define the following bracket rules which depend on the choice
of $\rho$:
\begin{align*}
  [\xi,x]&=-[x,\xi]=\xi(\rho(\alpha))x 
  \quad\text{for }
  \xi\in (\rad q)^*\text{ and } x\in \Gq_\alpha.\\ 
  [\xi,\zeta]&=0 \quad\text{for } \xi,\zeta\in (\radC q)^*.
\end{align*}

\begin{proposition}\cite[Lem. 5.2, Prop. 5.3(i)]{BKL}
  \label{prop:rho}
  Using the Lie algebra structure on $\Gq$, the rules above define
  a Lie algebra structure on $\tildeG$ which is independent of the
  choice of $\rho$. 
\end{proposition}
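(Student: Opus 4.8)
The plan is to check two things: first that the displayed rules, together with the bracket of $\Gq$, make $\tildeG=\Gq\oplus(\radC q)^*$ a Lie algebra, and then that the structure so obtained is independent of the projection $\rho$. For the first point I would record that $F$ is $\ZZ^n$-graded (its generators being homogeneous) and that $I(q)$ is generated by homogeneous elements, so that $\Gq=\bigoplus_{\alpha\in\ZZ^n}\Gq_\alpha$ as a vector space; thus $[\xi,x]:=\xi(\rho(\alpha))x$ for $x\in\Gq_\alpha$ extends linearly to a well-defined operation, and the bracket on $\tildeG$ is bilinear, and anticommutative by the displayed rules $[\xi,x]=-[x,\xi]$, $[\xi,\zeta]=0$ together with the anticommutativity of the bracket of $\Gq$. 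Only the Jacobi identity needs checking, and since it holds on $\Gq$ and $[(\radC q)^*,(\radC q)^*]=\0$, it suffices to treat homogeneous triples with at least one entry in $(\radC q)^*$. The case of three such entries is trivial; for $\xi,\zeta\in(\radC q)^*$ and $x\in\Gq_\alpha$ the cyclic sum is $\zeta(\rho(\alpha))\xi(\rho(\alpha))x-\xi(\rho(\alpha))\zeta(\rho(\alpha))x=0$; and for $\xi\in(\radC q)^*$, $x\in\Gq_\alpha$, $y\in\Gq_\beta$ it is $\bigl(\xi(\rho(\alpha+\beta))-\xi(\rho(\beta))-\xi(\rho(\alpha))\bigr)[x,y]=0$, using $[x,y]\in\Gq_{\alpha+\beta}$ and the linearity of $\rho$ and $\xi$. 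These are the same computations already performed for $N$ in Section~\ref{sec:eala_associated_unit_form}.

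For the independence, write $\tildeG_\rho$ for $\tildeG$ equipped with the bracket coming from $\rho$, and given two projections $\rho,\rho'\colon\CC^n\to\radC q$ set $\mu:=\rho-\rho'$. Both $\rho$ and $\rho'$ restrict to the identity on $\radC q$, so $\mu$ vanishes on $\radC q$; hence for each $\xi\in(\radC q)^*$ the linear functional $\xi\circ\mu$ on $\CC^n$ vanishes on $\radC q$. The crucial observation is that the functionals on $\CC^n$ vanishing on $\radC q$ are exactly those of the form $q(a,-)$ with $a\in\CC^n$: indeed $a\mapsto q(a,-)$ has kernel $\radC q$, its image lies in that space of functionals, and both have dimension $n-\corank q$, forcing equality. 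Moreover, relation (G2) makes $\ad h_i$ act on $\Gq_\alpha$ as the scalar $q(c_i,\alpha)$ (the scalar is additive in $\alpha$ and equals $C_{ij}$ on $c_j$), so for $h_a:=\sum_i a_i h_i\in H$ and $x\in\Gq_\alpha$ one has $[h_a,x]=q(a,\alpha)x$. Combining, I can choose — linearly in $\xi$ — an element $c_\xi\in H$ with $[c_\xi,x]=(\xi\circ\mu)(\alpha)x$ for all $x\in\Gq_\alpha$.

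Then I would define $\Phi\colon\tildeG_\rho\to\tildeG_{\rho'}$ by $\Phi|_{\Gq}=\mathrm{id}$ and $\Phi(\xi)=\xi+c_\xi$ for $\xi\in(\radC q)^*$. Since $c_\xi$ is homogeneous of degree $0$, $\Phi$ is a degree-preserving linear bijection (block-triangular and unipotent), and it is a Lie algebra morphism: on $\Gq\times\Gq$ both brackets agree; on $(\radC q)^*\times\Gq$ one has $[\Phi\xi,\Phi x]_{\rho'}=\xi(\rho'(\alpha))x+[c_\xi,x]=\bigl(\xi(\rho'(\alpha))+(\xi\circ\mu)(\alpha)\bigr)x=\xi(\rho(\alpha))x=\Phi([\xi,x]_\rho)$; and on $(\radC q)^*\times(\radC q)^*$ every term vanishes, because $c_\xi,c_\zeta$ have degree $0$ (so $[\xi,c_\zeta]_{\rho'}=\xi(\rho'(0))c_\zeta=0$) and $[c_\xi,c_\zeta]=0$ by (G1). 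Hence $\Phi$ is an isomorphism of graded Lie algebras, proving independence of $\rho$. I expect the only genuinely substantive step to be the surjectivity of $H\to(\CC^n/\radC q)^*$, $h_a\mapsto q(a,-)$ — it is precisely what lets a change of $\rho$ be absorbed into $\Phi$ — while everything else is routine once the $\ZZ^n$-grading of $\Gq$ is in place.
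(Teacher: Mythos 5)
The paper offers no proof of this proposition, citing it directly from \cite[Lem.~5.2, Prop.~5.3(i)]{BKL}. Your argument is correct and self-contained, and it follows the same lines as the cited proofs: the Jacobi identity is checked exactly as for $N$ using the $\ZZ^n$-grading of $\Gq$, and the independence of $\rho$ is obtained by absorbing the difference of two projections into $H$ via the surjection $a\mapsto q(a,-)$ of $\CC^n$ onto the functionals vanishing on $\radC q$, together with the fact (from (G2)) that $\ad h_a$ acts on $\Gq_\alpha$ by the scalar $q(a,\alpha)$.
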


For $h=\sum_{i=1}^n \lambda_{i}h_{i}\in H\subset\tildeG$, define
$r(h)=\sum_{i=1}^n \lambda_{i}c_{i}\in\CC^n$. We define a bilinear
form $\B:\H\times H\rightarrow\CC^n$ by the rules 
\begin{align*}
  (h,h')&=r(h)^\top C r(h') \for h,h'\in H,\\
  (\xi,h)&=\xi(\rho(r(h))) \for \xi\in(\radC q)^*\ \and\ h\in H.
\end{align*}
We observe that this form is non-degenerate in the second
variable.
Indeed, if $r(h)\in\radC q$ is non-zero then there exists
$\xi\in(\radC q)^*$ such that $\xi(\rho(r(h))\neq0$. Also, for
$\alpha=\sum_{i=1}^n \lambda_i c_i\in \CC^n$ define
$h_\alpha:=\sum_{i=1}^n\lambda_i h_i\in H$ so as to have
$r(h_\alpha)=\alpha$. 

\begin{proposition}\cite[Prop. 5.3 (ii)]{BKL}
  The Lie algebra $\tildeG$ admits a root space decomposition,
  that is $\tildeG=\bigoplus_{\alpha\in
    R}\tildeG_\alpha$, 
  where 
  \[ 
  \tildeG_{\alpha}:=\{x\in\tildeG:[h,x]=(h,h_\alpha)x \forall
  h\in \H\} \for \alpha\in R. 
  \]
\end{proposition}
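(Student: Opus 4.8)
The plan is to show that the $\ZZ^n$-grading already present on $\Gq = F/I(q)$ is exactly the root space decomposition of $\tildeG$, and that the extension by $(\radC q)^*$ is precisely what distinguishes roots differing by a vector of $\rad q$. First I would record the grading of $\Gq$: since the generators $f_{-i},h_i,f_i$ are homogeneous, $F$ and hence $\Gq$ are $\ZZ^n$-graded, say $\Gq=\bigoplus_{\gamma\in\ZZ^n}\Gq_\gamma$. Using relation (G3) every monomial can be rewritten modulo $I(q)$ in terms of the generators $f_{\varepsilon i}$ alone, and then, by the Jacobi identity and antisymmetry, as a linear combination of left-normed brackets of the $f_{\varepsilon i}$ of the same degree, i.e. of the type occurring in (G$\infty$); that relation forces all such brackets of degree $\gamma\notin R$ to vanish, so $\Gq_\gamma=\set{0}$ for $\gamma\notin R$ and $\Gq=\bigoplus_{\alpha\in R}\Gq_\alpha$.

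Next I would check that each homogeneous component is a joint eigenspace for $\ad\H$. Fix $\alpha=\sum_i\alpha_i c_i\in R$ and a monomial $x\in\Gq_\alpha$. For $h_j\in H$ one has $[h_j,f_{\varepsilon i}]=\varepsilon C_{ij}f_{\varepsilon i}$ by (G2), and since $C_{ij}=q(c_i,c_j)$ this equals $(h_j,h_{\varepsilon c_i})f_{\varepsilon i}$, where $\varepsilon c_i=\deg f_{\varepsilon i}$; because $\ad h_j$ is a derivation and $r$ together with the form are linear, an induction on the length of $x$ yields $[h_j,x]=(h_j,h_\alpha)x$, hence $[h,x]=(h,h_\alpha)x$ for all $h\in H$. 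For $\xi\in(\radC q)^*$ the defining bracket of $\tildeG$ gives $[\xi,x]=\xi(\rho(\alpha))x=\xi(\rho(r(h_\alpha)))x=(\xi,h_\alpha)x$. Thus $\Gq_\alpha\subseteq\tildeG_\alpha$; moreover $(\radC q)^*$ is annihilated by $\ad\H$ and $(h,h_0)=0$, so $(\radC q)^*\subseteq\tildeG_0$. Combined with the previous paragraph, $\tildeG=\sum_{\alpha\in R}\tildeG_\alpha$.

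It remains to show the sum is direct, for which it suffices that the functionals $\lambda_\alpha:=(-,h_\alpha)\in\H^*$ with $\alpha\in R$ be pairwise distinct, since then the joint eigenspaces $\tildeG_\alpha$ of the commuting family $\setP{\ad h}{h\in\H}$ meet pairwise in $\set{0}$. So I would check that $\alpha\mapsto\lambda_\alpha$ is injective on $\CC^n$. Its restriction to $H$ is the functional $h\mapsto q(r(h),\alpha)$, which, $q$ being non-negative, determines $\alpha$ modulo $\radC q=\setP{v}{q(v,-)=0}$; its restriction to $(\radC q)^*$ is $\xi\mapsto\xi(\rho(\alpha))$, which determines $\rho(\alpha)\in\radC q$ because a finite-dimensional dual space separates points. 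Since $\rho$ is a projection onto $\radC q$, the class of $\alpha$ in $\CC^n/\radC q$ is the class of $\alpha-\rho(\alpha)\in\ker\rho$, and this class together with $\rho(\alpha)$ recovers $\alpha$. Therefore $\lambda_\alpha=\lambda_\beta$ implies $\alpha=\beta$, and $\tildeG=\bigoplus_{\alpha\in R}\tildeG_\alpha$, with $\tildeG_\alpha=\Gq_\alpha$ for $\alpha\neq0$ and $\tildeG_0=\Gq_0\oplus(\radC q)^*$.

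The main obstacle is this last step: verifying that adjoining $(\radC q)^*$ genuinely separates roots that differ by a vector of $\rad q$ — which is precisely why one passes from $\Gq$, which in general admits no root space decomposition, to $\tildeG$. The first two steps are routine bookkeeping with the $\ZZ^n$-grading once (G2), (G3) and (G$\infty$) are in hand; the one point to be careful about there is the reduction of an arbitrary monomial to a combination of left-normed brackets in the $f_{\varepsilon i}$ before invoking (G$\infty$).
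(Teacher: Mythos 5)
Your argument is correct. The paper does not actually prove this proposition --- it is quoted from \cite[Prop.~5.3(ii)]{BKL} --- but your route is the standard one and uses exactly the ingredients the paper records: homogeneity of the generators and of the relations (G1)--(G$\infty$) gives the $\ZZ^n$-grading of $G$ with $G_\gamma=\set{0}$ for $\gamma\notin R$, and your final separation step (that $\alpha\mapsto(-,h_\alpha)$ is injective because $(\radC q)^*$ detects $\rho(\alpha)$ while $H$ detects $\alpha$ modulo $\radC q$) is precisely the non-degeneracy of the form in its second variable that the paper observes immediately before stating the proposition.
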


Moreover, we have the following result.

\begin{theorem}
  \label{thm:BKL-q-q'}
  \cite[Thm. 1.1]{BKL}
  Let $q$ and $q'$ be two equivalent connected non-negative unit
  forms.
  Then there exists an isomorphism of graded Lie algebras
  $\tilde{\varphi}:\tildeG(q)\to\tildeG(q')$ which maps the
  generators of $\tildeG(q)$ to the generators of $\tildeG(q')$.
\end{theorem}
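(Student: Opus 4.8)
The plan is to reduce the statement to a single ``elementary'' change of basis and then to dispatch the cases one at a time. First I would use Theorem~\ref{thm:Dynkin-types}: since $q$ and $q'$ are connected non-negative unit forms of the same corank and Dynkin type, there is a $\ZZ$-invertible $T$ with $q'=q\circ T$, and such a $T$ can be written as a composition of \emph{elementary transformations} of $\ZZ^n$ --- transpositions $c_i\leftrightarrow c_j$, sign changes $c_k\mapsto -c_k$, and flations $c_l\mapsto c_l+\varepsilon c_k$ (admissible exactly when $c_l+\varepsilon c_k\in R(q)^\times$) --- since both $q$ and $q'$ reduce to a common normal form by such moves; this is the procedure behind Theorem~\ref{thm:Dynkin-types} (see \cite{BdP}), the only point to note being that the intermediate forms need not be connected, which is harmless since the construction of $\tildeG(q)$, its bracket and its root-space decomposition are available for every unit form $q$, connected or not. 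Hence it suffices to produce, for each elementary $T$, a graded isomorphism $\tildeG(q)\to\tildeG(q')$ respecting the generating systems, and to compose over the elementary steps.

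For a transposition $\sigma$ one sets $f_{\varepsilon i}\mapsto f_{\varepsilon\sigma(i)}$ and $h_i\mapsto h_{\sigma(i)}$: the relations (G1)--(G3) and (G$\infty$) of $\tildeG(q)$ become, via $R(q')=T^{-1}R(q)$, literally the relabelled relations of $\tildeG(q')$, so the universal property of $F/I(q)$ yields a homomorphism, and the opposite relabelling is its inverse. For a sign change at $k$ one sets $f_{\pm k}\mapsto f_{\mp k}$, $h_k\mapsto -h_k$, and fixes the remaining generators; a short verification against (G2)--(G3) using $C'_{ij}=\delta_i\delta_j C_{ij}$ (with $\delta_k=-1$ and $\delta_i=1$ otherwise), together with the fact that $T$ carries $R(q)$ onto $R(q')$ so that (G$\infty$) transfers, shows this is an isomorphism. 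In both cases the $(\radC q)^*$-summand is transported by the isomorphism $(\radC q)^*\cong(\radC q')^*$ induced by $T$, and these maps visibly send generators to generators.

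The substantive case is a flation, which is the precise analogue of changing the simple system of a Kac--Moody algebra. Suppose $q_{kl}=-1$ with $Tc_l=c_l+c_k$ and $Tc_i=c_i$ otherwise, and put $\gamma:=c_k+c_l\in R(q)^\times$, so that $\tildeG(q)_\gamma$ is spanned by $[f_k,f_l]$. I would introduce the system $\tilde f_l:=[f_k,f_l]$, $\tilde f_{-l}:=\pm[f_{-l},f_{-k}]$ (the sign chosen so that $[\tilde f_l,\tilde f_{-l}]$ is the correct degree-zero element), $\tilde f_{\pm i}:=f_{\pm i}$ and $\tilde h_i:=h_i$ for $i\neq l$, and $\tilde h_l$ the element forced by (G3), and then check that $\{\tilde f_{\pm i},\tilde h_i\}$ satisfies exactly the defining relations of $\tildeG(q')$. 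The Cartan relations hold because $q'(Tc_i,Tc_j)=q(c_i,c_j)$ by the construction of $T$; the generalized Serre relations (G$\infty$) for $q'$ hold because a bracket of the $\tilde f$'s of ``$q'$-degree'' $\beta$ is a bracket of the $f$'s of ordinary degree $T\beta$ in $\tildeG(q)$, and $T$ maps $R(q')$ bijectively onto $R(q)$, so if $\beta\notin R(q')$ then $T\beta\notin R(q)$ and that bracket lies in a zero root space of $\tildeG(q)$ --- which is precisely what (G$\infty$) for $q$ delivers. By the universal property this produces a homomorphism $\tildeG(q')\to\tildeG(q)$; the symmetric construction inside $\tildeG(q')$, using the inverse deflation $T^{-1}c_l=c_l-c_k$, gives a homomorphism the other way, and a short Jacobi-identity computation shows that each composite is the identity on generators. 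Transporting $(\radC q)^*$ along $T$ completes the isomorphism, and the theorem follows by composing the elementary isomorphisms.

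I expect the heart of the difficulty to be exactly this last verification: that $\tildeG(q)$ already realises every vanishing imposed by the presentation of $\tildeG(q')$, i.e.\ that the generalized Serre relations transfer with no ``new'' relation needed. Arranging this cleanly, and simultaneously tracking the signs coming from the choice of the bilinear form $B$ and the projection $\rho$ so that the two homomorphisms are literally mutually inverse (not merely inverse up to a scalar on each root space), is the technical core of \cite[Thm.~1.1]{BKL}; everything else is bookkeeping with the elementary transformations and with the universal property of free Lie algebras modulo relations.
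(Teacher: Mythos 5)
The paper does not actually prove this statement: Theorem \ref{thm:BKL-q-q'} is imported verbatim from \cite[Thm.~1.1]{BKL}, so there is no internal proof to compare yours against. That said, your sketch follows the same strategy as the original proof in \cite{BKL}: reduce the equivalence to a chain of point transformations and flations/deflations (the moves underlying Theorem \ref{thm:Dynkin-types}, so that one only needs \emph{some} graded isomorphism, not one realizing the given $T$), and, for a flation $c_l\mapsto c_l+c_k$, replace the generator $f_l$ by the root vector $[f_k,f_l]$ and verify the presentation of $\tildeG(q')$ inside $\tildeG(q)$, with an explicit inverse coming from the opposite deflation. In outline this is the right argument.

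Two places are thinner than they should be. First, your justification of (G$\infty$) for the new generators --- ``a bracket of the $\tilde f$'s of $q'$-degree $\beta$ lies in a zero root space of $\tildeG(q)$, which is precisely what (G$\infty$) for $q$ delivers'' --- conflates the relation (G$\infty$) with the statement $\Gq_{T\beta}=\set{0}$ for $T\beta\notin R(q)$. The relation only kills \emph{left-normed} iterated brackets of the original generators; an iterated bracket of the $\tilde f$'s is not of that shape, so you must first rewrite it via the Jacobi identity as a linear combination of left-normed monomials of the same degree, i.e.\ you need the lemma that $\Gq_\alpha=\set{0}$ for every $\alpha\notin R(q)$. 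That is exactly where \cite{BKL} do the real work, and it should be isolated as a step rather than absorbed into the phrase quoted above. Second, for a flation your isomorphism sends $f_l$ to a bracket such as $[f'_{-k},f'_l]$, which is not a generator of $\tildeG(q')$; the clause ``maps the generators to the generators'' therefore cannot be read literally, and should be interpreted as carrying the homogeneous generating system of $\tildeG(q)$ to one of $\tildeG(q')$ compatibly with the regrading, in particular restricting to an isomorphism $\H(q)\to\H(q')$ --- which is the property actually used later in Proposition \ref{prop:epi} and Theorem \ref{thm:q-q'}. A minor further point: the form $B$ and the signs $\epsilon(-,-)$ belong to the construction of $E(q)$, not of $\tildeG(q)$, so there are no ``$B$-signs'' to track here; only the transport of $(\radC q)^*$ along $T$ and the independence of the choice of $\rho$ (Proposition \ref{prop:rho}) require attention.
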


We now proceed by analogy with the construction of the Kac-Moody
algebras, \cf \cite[Ch. 14]{Carter}.
The Lie algebra $\tildeG$ is an $\H^*$-graded $\H$-module, hence
it contains a unique maximal ideal $I$ such that $I\cap
\H=\{0\}$. 
The following proposition describes the relationship between
$\tildeG$ and $E$.

\begin{proposition}\label{prop:epi}
  Let $\q$ be a connected non-negative unit form and let $R(q)$ be
  the extended affine root system associated with $q$, 
  see Proposition \ref{prop:Rq}. 
  Then there is an epimorphism of graded Lie algebras 
  $\tildeG\rightarrow E$ which induces an isomorphism between 
  $\H\subset\tildeG$ and $\H\subset E$. 
\end{proposition}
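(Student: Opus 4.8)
The plan is to construct the epimorphism on generators and then check that it respects the defining relations, using the fact that $E$ is generated by appropriate elements of degrees $\pm c_i$ and $0$. First I would fix, for each $i\in\set{1,\dots,n}$, elements $E_i:=e_{c_i}\in E_{c_i}$ and $F_i:=-e_{-c_i}\in E_{-c_i}$ (the sign chosen so that $[E_i,F_i]$ lands on the right normalization), and $H_i:=[E_i,F_i]\in E_0$. One computes from (B3) and Remark \ref{rmk:anticommutativity} that $[e_{c_i},e_{-c_i}]=\epsilon(c_i,-c_i)\pi_0(c_i)$, so $H_i$ is a nonzero scalar multiple of $\pi_0(c_i)$; adjusting the scalar in $F_i$ if necessary, we may assume $H_i=\pi_0(c_i)$. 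Next I would define a map on the free Lie algebra $F$ by $f_i\mapsto E_i$, $f_{-i}\mapsto F_i$, $h_i\mapsto H_i$, and extend it to $\tildeG$ by sending $(\radC q)^*$ identically onto the copy of $(\radC q)^*$ sitting in $E_0$. The main thing to verify is that the relations (G1), (G2), (G3) and (G$\infty$), together with the bracket rules defining $\tildeG$ from $F/I(q)$, are satisfied by the images; (G1) is immediate since $E_0$ is abelian, (G3) holds by construction, (G2) is a short computation using (B2) and the identification $H_i=\pi_0(c_i)$ together with $q(c_i,c_j)=C_{ij}$, and (G$\infty$) holds because $E_{\alpha}=\set{0}$ whenever $\alpha\notin R$, so any iterated bracket of the $E_i,F_i$ whose degree leaves $R$ must vanish. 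The bracket rules involving $(\radC q)^*$ match rules (B4) and (B6) on $E$ by inspection, using the same projection $\rho$. This gives a homomorphism $\tildeG\to E$.

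For surjectivity I would argue that the image is a subalgebra containing $E_{\pm c_i}$ and $\pi_0(c_i)$ for all $i$, as well as all of $(\radC q)^*$; it then suffices to show $E$ is generated by these elements. Since $R^{\times}=q^{-1}(1)$ and $q$ is connected, every $\alpha\in R^{\times}$ can be reached from some $c_i$ by a chain of reflections, equivalently by successively adding elements of $R^{\times}$; using (B3) one sees that $e_{\alpha}$ lies in the subalgebra generated by the $e_{\pm c_j}$ (for $\alpha\in R^\times$ with $\alpha\pm c_j\in R^\times$, the bracket $[e_{c_j},e_{\alpha-c_j}]$ is a nonzero multiple of $e_\alpha$). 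Then for nonzero $\sigma\in R^0$, by axiom (R8) there is $\alpha\in R^\times$ with $\alpha+\sigma\in R$, in fact $\alpha+\sigma\in R^{\times}$, so $E_\sigma$ is hit via (B2)-type brackets $[e_{\alpha+\sigma},e_{-\alpha}]$; since $\pi_\sigma$ is surjective onto $\CC^n/\radC q$ and the degree-$\sigma$ bracket outputs can be made to span this quotient, all of $E_\sigma$ is in the image. Finally $E_0=\CC^n\oplus(\radC q)^*$: the $(\radC q)^*$ part is in the image by construction, and $\CC^n$ (or rather its image $\pi_0(\CC^n)=\CC^n$ in $E_0$) is spanned by the brackets $[e_\alpha,e_{-\alpha}]=\epsilon(\alpha,\alpha)\pi_0(\alpha)$ for $\alpha\in R^\times$, which span $\CC^n$ since $R^\times$ spans $\RR^n$ by (R3). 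Hence the map is an epimorphism.

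It remains to see that it induces an isomorphism $\H\subset\tildeG\xrightarrow{\ \sim\ }\H=E_0\subset E$. By construction $\H=H\oplus(\radC q)^*$ maps onto $\mathrm{span}\set{\pi_0(c_i)}\oplus(\radC q)^*=\CC^n\oplus(\radC q)^*=E_0$, and the map is the identity on the $(\radC q)^*$ summand; on $H$ it sends $h_i\mapsto\pi_0(c_i)$, which is an isomorphism $H\to\CC^n$ because $\set{c_1,\dots,c_n}$ is a basis of $\CC^n$ and, by \cite[Prop. 5.3(ii)]{BKL}, $H$ has dimension $n$ with the $h_i$ linearly independent. So the restriction to $\H$ is an isomorphism onto $E_0$, which is what is claimed.

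The step I expect to be the main obstacle is verifying cleanly that the relations (G$\infty$) are respected, i.e.\ that an arbitrary left-normed iterated bracket $[f_{\varepsilon_t i_t},[\dots,[f_{\varepsilon_2 i_2},f_{\varepsilon_1 i_1}]]]$ with total degree outside $R$ maps to zero in $E$. The point is that each partial bracket has a well-defined degree and lands in the corresponding root space $E_\beta$, which is zero as soon as $\beta\notin R$; but one must make sure that the bracket rules (B1)-(B6) never produce a nonzero element in degree $\notin R$, which follows because (B1)-(B3) only ever output into $N_{\sigma+\tau}$, $N_{\beta+\sigma}$, $N_{\alpha+\beta}$ with the convention that these are zero when the index is not in $R$ — so homogeneity of the construction does the work, and no genuine computation is needed beyond bookkeeping of degrees. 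A secondary subtlety is getting the normalizations of $E_i,F_i$ consistent so that $[E_i,F_i]=\pi_0(c_i)$ exactly and (G2) holds on the nose; this is handled by the sign choices above and Lemma \ref{lema:sign}(i).
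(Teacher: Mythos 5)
Your proposal is correct and follows essentially the same route as the paper: send $f_{\varepsilon i}\mapsto \varepsilon e_{\varepsilon c_i}$ and $h_i\mapsto [e_{c_i},-e_{-c_i}]=\pi_0(c_i)$, check (G1)--(G$\infty$) (with (G$\infty$) coming for free from the grading, since $E_\alpha=\set{0}$ for $\alpha\notin R$), extend by the identity on $(\radC q)^*$, and obtain surjectivity from the fact that the images generate $\CC^n\oplus\bigoplus_{\alpha\neq0}E_\alpha$. The paper leaves most of these verifications as ``readily verified,'' whereas you supply them explicitly; the only quibble is that your bracket $[e_{\alpha+\sigma},e_{-\alpha}]$ hitting $E_\sigma$ is an instance of (B3), not (B2), and the cleanest justification there is that $\alpha+\sigma\in R^\times$ for \emph{every} $\alpha\in R^\times$ (as $\sigma\in\rad q$), so the elements $\pi_\sigma(\alpha)$, $\alpha\in R^\times$, already span $\CC^n/\radC q$.
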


\begin{proof}
  Let $h'_{i}=[e_{c_{i}},-e_{-c_{i}}]\in E$ for 
  $i\in\set{1,\dots,n}$. 
  It is readily verified that the elements
  $-e_{-c_{i}},h'_{i},e_{c_{i}}$ of $E$ satisfy relations
  (G1)-(G$\infty$) and that they generate
  $\CC^n\oplus\bigoplus_{\alpha\neq 0} E_\alpha$ as a Lie algebra,
  see Definition \ref{def:ears} and (R6) (compare also with the proof
  of \cite[Prop. 2.2]{BKL}). 
  Hence there is an epimorphism of graded Lie algebras
  $\Gq\rightarrow\CC^n\oplus\bigoplus_{\alpha\neq 0} E_\alpha$
  mapping $h_{i}$ to $h'_{i}$ and $f_{\varepsilon i}$ to 
  $\varepsilon e_{\varepsilon c_{i}}$ for $i\in\set{1,\dots,n}$ and
  $\varepsilon=\pm 1$ with the required properties. 
  This morphism extends in a natural way to an epimorphism of
  graded Lie algebras $\tildeG\rightarrow E$. 
  The last statement is immediate.
\end{proof}

We are ready to give the proof of Theorem \ref{thm:GKM}.

\begin{proof}[Proof of Theorem \ref{thm:GKM}]
  It suffices to show that $E$ has no non-zero ideal $I$ with
  $I\cap \H=\set{0}$, as in this case the epimorphism from
  Proposition \ref{prop:epi} induces an isomorphism of graded Lie
  algebras $\tilde{G}/I\rightarrow E$. Suppose $I$ is a
  non-zero ideal of $E$ with $I\cap \H=\set{0}$. Then $I$ is a
  graded ideal, that is
  \[ I=\bigoplus_{\alpha\in R} (E_\alpha\cap I). \]
  Let $x\in(E_\alpha\cap I)$ be a non-zero homogenous element of
  degree $\alpha\in R$. We distinguish two cases: 

  (i) If $\alpha\in R^\times$ then we may assume that
  $x=e_{\alpha}$. But
  $0\neq[e_{\alpha},-e_{-\alpha}]=\pi_{0}(\alpha)\in (\H\cap I)$ as
  $I$ is an ideal, which contradicts $I\cap \H=\{0\}$. 

  (ii) If $\alpha\in R^0$ is non-zero then $x=\pi_{\alpha}(v)$ for
  some $v\in \CC^n\setminus\radC q$. Hence there exists some
  $w\in\CC^n$ such that $q(v,w)\neq0$. Thus
  $0\neq[\pi_{\alpha}(v),\pi_{-\alpha}(w)]=q(v,w)\pi_{0}(\alpha)\in
  (\H\cap I)$, which again contradicts $I\cap \H=\{0\}$. This
  concludes the proof.\qedhere 
\end{proof}

Finally, we obtain the following result.

\begin{theorem}
  \label{thm:q-q'}
  Let $q$ and $q'$ be two connected non-negative unit
  forms.
  If $q$ and $q'$ are equivalent, then there is an
  isomorphism of graded Lie algebras $\varphi:E(q)\to E(q')$ which
  maps the generators of $E(q)$ to the generators of
  $E(q')$. Conversely, if $E(q)$ and $E(q')$ are isomorphic as graded
  Lie algebras, then $q$ and $q'$ are equivalent.
\end{theorem}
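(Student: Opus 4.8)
The plan is to prove the two implications separately, using the results already at hand. For the forward direction, suppose $q$ and $q'$ are equivalent. By Theorem \ref{thm:BKL-q-q'} there is an isomorphism of graded Lie algebras $\tilde\varphi:\tildeG(q)\to\tildeG(q')$ carrying generators to generators. Since an isomorphism of graded Lie algebras restricts to an isomorphism $\H(q)\xrightarrow{\sim}\H(q')$, it must carry the unique maximal graded ideal $I\subset\tildeG(q)$ with $I\cap\H=\set{0}$ onto the corresponding ideal $I'\subset\tildeG(q')$ (the maximal graded ideal meeting the Cartan trivially is characterized intrinsically, so it is preserved by any graded isomorphism intertwining the Cartans). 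Hence $\tilde\varphi$ descends to an isomorphism $\tildeG(q)/I\xrightarrow{\sim}\tildeG(q')/I'$, and by Theorem \ref{thm:GKM} both sides are $E(q)$ and $E(q')$ respectively. Tracking the images of the generators $-e_{-c_i},h'_i,e_{c_i}$ through the chain of identifications in Proposition \ref{prop:epi} shows that the resulting $\varphi:E(q)\to E(q')$ sends generators to generators.

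For the converse, suppose $\varphi:E(q)\to E(q')$ is an isomorphism of graded Lie algebras; say $q$ has $n$ variables and $q'$ has $n'$ variables, with coranks $\nu$ and $\nu'$. First I would extract numerical invariants from the grading. The grading groups of $E(q)$ and $E(q')$ are $\ZZ^n$ and $\ZZ^{n'}$ respectively, so $\varphi$ induces a group isomorphism $\ZZ^n\cong\ZZ^{n'}$, forcing $n=n'$ and identifying the support $R(q)=\set{\alpha:E(q)_\alpha\neq\set{0}}$ with $R(q')$ under this isomorphism. Since $R(q)^0=q^{-1}(0)=\rad q$ is exactly the set of degrees $\alpha\neq 0$ with $\dim E(q)_\alpha = n-\nu$ (whereas non-isotropic root spaces are one-dimensional and $E(q)_0$ has dimension $n+\nu$), the isomorphism matches $\rad q$ with $\rad q'$, hence $\nu=\nu'$; and it matches $R(q)^\times=q^{-1}(1)$ with $R(q')^\times=q'^{-1}(1)$.

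The key remaining point is to recover the full quadratic form, not just its level sets $q^{-1}(0)$ and $q^{-1}(1)$, from the Lie algebra structure. Here I would use the bracket: for $\alpha\in R^\times$, the element $[e_\alpha,-e_{-\alpha}]=\pi_0(\alpha)\in E(q)_0$, and more generally the bracket rules (B1)–(B3) encode the bilinear form $q(-,-)$ intrinsically (e.g. $[\pi_\sigma(v),\pi_{-\sigma}(w)]=q(v,w)\pi_0(\sigma)$ and $[\pi_\sigma(v),e_\beta]=\epsilon(\sigma,\beta)q(v,\beta)e_{\beta+\sigma}$). Concretely, for $\alpha,\beta\in R^\times$ with $\alpha+\beta\in R^0$, rule (B3) gives $[e_\alpha,e_\beta]=\epsilon(\alpha,\beta)\pi_{\alpha+\beta}(\alpha)$, and iterating a bracket with $e_{-\beta}$ recovers $q(\alpha,\beta)$ as a structure constant; since $R^\times$ spans $\ZZ^n$, this determines the whole symmetric matrix $C$ up to the $\ZZ$-linear change of coordinates induced by $\varphi$ on the grading group. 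Therefore $q'=q\circ T$ for the automorphism $T$ of $\ZZ^n$ underlying $\varphi$, so $q$ and $q'$ are equivalent. (Alternatively, and more cheaply, once $n=n'$, $\nu=\nu'$ and $R(q)^\times$ is identified with $R(q')^\times$ as subsets of $\ZZ^n$, one invokes Theorem \ref{thm:Dynkin-types}: the Dynkin type of a connected non-negative unit form is determined by its root system, so $q$ and $q'$ have the same corank and the same Dynkin type and are thus equivalent.)

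I expect the main obstacle to be the converse direction, specifically justifying that a graded isomorphism of $E(q)$ with $E(q')$ must come from a genuine coordinate change $T\in\mathrm{GL}_n(\ZZ)$ that intertwines the two forms — i.e.\ promoting the combinatorial identification of root systems to an equivalence of unit forms. The cleanest route is the one via Theorem \ref{thm:Dynkin-types}: it reduces everything to checking that the connectedness, the corank, and the Dynkin type are all visible in the graded Lie algebra, the first two of which were handled above, and the last of which follows since, by the same theorem, the Dynkin type is itself determined by the isomorphism class of the root system $R(q)$ as a subset of the abstract grading lattice.
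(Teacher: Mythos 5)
Your proposal is correct and, in substance, follows the paper's own route: the forward direction is exactly the paper's (Theorem \ref{thm:BKL-q-q'} plus Proposition \ref{prop:epi}/Theorem \ref{thm:GKM}), and the ``cheaper alternative'' you give for the converse --- reduce to equality of corank and Dynkin type via Theorem \ref{thm:Dynkin-types}, reading both off from the graded structure --- is precisely what the paper does. Your primary converse argument (recovering the symmetric bilinear form from structure constants, so that $q'=q\circ T$ for the lattice automorphism $T$ underlying $\varphi$) is a slightly more direct variant that the paper does not carry out; it does work, but note that to pin down $q(\alpha,\beta)$ as an eigenvalue of $\ad[e_\alpha,e_{-\alpha}]$ you must first normalize the scalars $\lambda_\alpha$ with $\varphi(e_\alpha)=\lambda_\alpha e'_{T\alpha}$ (the relation $[[e_\alpha,e_{-\alpha}],e_\alpha]=-2e_\alpha$ forces $\lambda_\alpha\lambda_{-\alpha}=1$, after which the argument goes through). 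One small slip: your claim that nonzero isotropic roots are distinguished from non-isotropic ones by $\dim E(q)_\alpha=n-\nu$ versus $1$ fails when $n-\nu=1$ (Dynkin type $A_1$ with positive corank); a clean intrinsic separation is that $\alpha\in R^0\setminus\set{0}$ if and only if $2\alpha\in R$, which is preserved by the induced isomorphism of grading lattices, and $\nu=\nu'$ already follows from $\dim E(q)_0=\dim E(q')_0$ once $n=n'$ is known.
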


\begin{proof}
  Suppose that $q$ and $q'$ are equivalent. By Theorem
  \ref{thm:BKL-q-q'} there exists an isomorphism of graded Lie
  algebras $\tilde{\varphi}:\tildeG(q)\to\tildeG(q')$ which maps the
  generators of $\tildeG(q)$ to the generators of $\tildeG(q')$.  
  By Proposition \ref{prop:epi}, the isomorphism $\tilde{\varphi}$
  induces an isomorphism of graded Lie algebras $\varphi:E(q)\to
  E(q')$ which maps the generators of $E(q)$ to the generators of
  $E(q')$.

  Conversely, suppose that $E(q)$ and $E(q')$ are isomorphic as graded
  Lie algebras. In order to show that $q$ and $q'$ are equivalent, it
  is sufficient to show that they have the same corank and the same
  Dynkin type, see Theorem \ref{thm:Dynkin-types} and the remark after
  it. From the definition of $E(q)$ and $E(q')$ and the fact that
  $E(q)$ and $E(q')$ are isomorphic, it readily follows that $\corank
  q=0$ if and only if $\corank q'=0$. If $q$ and $q'$ do not have
  corank 0, let $0\neq\alpha\in R(q)^0$ and $0\neq\alpha'\in
  R(q')^0$. It follows that 
  \begin{align*}
      \corank q=&\tfrac{1}{2}(\dim_\CC \H(q)-\dim_\CC E(q)_\alpha)\\
      =&\tfrac{1}{2}(\dim_\CC \H(q')-\dim_\CC E(q')_{\alpha'})=\corank q'.
  \end{align*}
  In both cases, it follows that $q$ and $q'$ have $n$ variables,
  where
  \[
  n=\dim_\CC \H(q)-\corank q =\dim_\CC \H(q')-\corank q'.
  \]
  Let $\nu=\corank q=\corank q'$. To see that $q$ and $q'$ have the
  same Dynkin type, note that Theorem
  \ref{thm:Dynkin-types} implies that $q_\Delta^{-1}(1)$ induces a
  finite root system $\dot{R}\subseteq R(q)^\times$ of rank $n-\nu$ and
  of the Dynkin type of $q$. Similarly, $R(q')^\times$ contains a
  finite root system $\dot{R}'$ of rank $n-\nu$ and of the Dynkin type
  of $q'$. Finally, the graded isomorphism between $E(q)$ and $E(q')$
  implies that $\dot{R}$ and $\dot{R'}$ are isomorphic, as they
  correspond to isomorphic finite-dimensional Lie subalgebras of
  $E(q)$ and $E(q')$ respectively. Hence $q$ and
  $q'$ have the same Dynkin type. This concludes the proof.
\end{proof}

\end{document}